\documentclass[12pt, letterpaper]{amsart}

\usepackage{amsfonts}
\usepackage{amsmath}
\usepackage{amssymb}
\usepackage{amsthm}
\usepackage{mathscinet}
\usepackage[margin=1in]{geometry}
\usepackage{color,hyperref,url}

\title{Riemann moduli spaces are quantum ergodic}

\author{Dean Baskin}
\address{Department of Mathematics, Texas A\&M University}
\email{dbaskin@math.tamu.edu}
\author{Jesse Gell-Redman}
\address{School of Mathematics and Statistics, University of
  Melbourne}
\email{jgell@unimelb.edu.au}
\author{Xiaolong Han}
\address{Department of Mathematics, California State University,
  Northridge}
\email{xiaolong.han@csun.edu}

\newcommand{\R}{\mathbb R}
\newcommand{\pd}[1][]{\partial_{#1}}
\newcommand{\norm}[2][]{\left\| #2\right\|_{#1}}
\newcommand{\Mreg}{M}
\newcommand{\Mfull}{\overline{M}}
\newcommand{\lap}{\Delta}

\newcommand{\reals}{\mathbb{R}}
\DeclareMathOperator{\Vol}{Vol}
\DeclareMathOperator{\WF}{WF}
\newcommand{\lo}{o}
\DeclareMathOperator{\supp}{supp}
\newcommand{\WP}{\mathrm{WP}}
\newcommand\lra{\longrightarrow}
\newcommand{\reg}{\mathrm{reg}}
\newcommand{\orb}{\mathrm{orb}}

\newtheorem{lemma}{Lemma}[section]
\newtheorem{theorem}[lemma]{Theorem}

\newtheorem{cor}[lemma]{Corollary}
\theoremstyle{remark}
\newtheorem*{rmk}{Remark}

\numberwithin{equation}{section}

\begin{document}

\begin{abstract}
In this note we show that the Riemann moduli spaces
$\mathcal{M}_{\gamma,n}$ equipped with the Weil--Petersson metric are
quantum ergodic for $3\gamma + n \geq 4$.  We also provide other
examples of singular spaces with ergodic geodesic flow for which
quantum ergodicity holds.
\end{abstract}

\maketitle

\section{Introduction}\label{sec:introduction}

The aim of this note is to establish quantum ergodicity on a class of
singular spaces; the main examples we address are the Riemann moduli
spaces $\mathcal{M}_{\gamma,n}$ of Riemann surfaces
of genus $\gamma$ with $n$ marked points equipped with the
Weil-Petersson metric $g_{\WP}$.  We work in the stable range $3\gamma
+ n \geq 4$, so $\mathcal{M}_{\gamma,
  n}$ is a complex orbifold of complex dimension $3 \gamma - 3 + n$
with smooth top dimensional stratum $\mathcal{M}_{\gamma, n, \reg}$. In this setting, we prove the following theorem:
\begin{theorem}[Quantum ergodicity on Riemann moduli spaces]\label{thm:QE-moduli}
Let $3\gamma + n \geq 4$ and $\lap_{g_{\WP}}$ be the positive
Laplacian with respect to the Weil-Petersson metric $g_\WP$ on
$\Mreg=\mathcal{M}_{\gamma,n, \reg}$. Suppose that $\{\phi_{j}\}$ is an
orthonormal basis of eigenfunctions of $\lap_{g_{\WP}}$ on
$\Mreg$ for the natural self-adjoint extension of $\lap_{g_{\WP}}$
  studied by Ji--Mazzeo--M{\"u}ller--Vasy~\cite{JMMV}. Then there is a density one subsequence
$\{\phi_{j_k}\}\subset\{\phi_{j}\}$ such that
\begin{equation*}
\langle A\phi_{j_k}, \phi_{j_k} \rangle \to \int_{S^{*}\Mreg}\sigma_{0}(A)\,d\mu\quad\text{as }k\to\infty
\end{equation*}
for all zero order pseudodifferential operators $A$ with Schwartz
kernel compactly supported in the interior of $\Mreg\times\Mreg$ and
$\sigma_0(A)$ is the principal symbol of $A$. Here, $d\mu$ is the
Liouville measure on the cosphere bundle $S^*\Mreg$ which is
normalized such that $\mu(S^*\Mreg)=1$.
\end{theorem}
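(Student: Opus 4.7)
The plan is to run the Shnirelman--Zelditch--Colin de Verdi\`ere quantum ergodicity argument on the smooth stratum $\Mreg$, treating the singular locus as a phase-space obstacle that Liouville-almost-every geodesic trajectory avoids. Three ingredients are needed: (i) ergodicity of the Weil--Petersson geodesic flow on $S^{*}\Mreg$, (ii) a local Weyl law for the self-adjoint extension $\lap_{g_{\WP}}$ of \cite{JMMV}, and (iii) an interior Egorov theorem compatible with that functional calculus.

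For (i), I would invoke the Burns--Masur--Wilkinson theorem asserting that the Weil--Petersson geodesic flow is ergodic with respect to the finite Liouville measure on $S^{*}\Mreg$, together with the companion fact that the set of trajectories reaching the singular locus in finite time has Liouville measure zero. For (ii), a local Weyl law for the JMMV extension, applied to $\sigma_{0}(A)$ compactly supported in the interior so that boundary concentration is not an issue, provides the necessary one-term asymptotic for $\sum_{\lambda_{j}\leq\lambda}\langle B\phi_{j},\phi_{j}\rangle$.

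With these inputs in hand, let $P=\sqrt{\lap_{g_{\WP}}}$ and $U(t)=e^{itP}$. For $T>0$ define the time average
\begin{equation*}
A_{T} = \frac{1}{T}\int_{0}^{T} U(-t)\, A\, U(t)\, dt,
\end{equation*}
so that $\langle A\phi_{j},\phi_{j}\rangle = \langle A_{T}\phi_{j},\phi_{j}\rangle$ for every $j$. Since $A$ has Schwartz kernel compactly supported in $\Mreg\times\Mreg$, an interior Egorov argument shows that, modulo an operator whose principal symbol is supported on a set of arbitrarily small Liouville measure, $A_{T}$ is a zero-order pseudodifferential operator with principal symbol the Birkhoff average
\begin{equation*}
a_{T} = \frac{1}{T}\int_{0}^{T}\sigma_{0}(A)\circ\Phi_{t}\,dt.
\end{equation*}
The local Weyl law applied to $A_{T}$ and to $A_{T}^{*}A_{T}$ yields a variance bound whose right-hand side reduces to $\|a_{T}-\overline{\sigma_{0}(A)}\|_{L^{2}(S^{*}\Mreg,d\mu)}^{2}$, where $\overline{\sigma_{0}(A)}=\int\sigma_{0}(A)\,d\mu$. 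By the $L^{2}$ mean ergodic theorem combined with ergodicity of the flow, this quantity vanishes as $T\to\infty$, and the density-one subsequence follows from the standard diagonal extraction.

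The main obstacle is ingredient (iii): controlling $U(-t)A\,U(t)$ in the incomplete geometry. Although $A$ is compactly supported in the interior, propagation spreads its wavefront set, which for large $t$ can approach the singular locus of $\Mfull$, where the microlocal calculus from \cite{JMMV} must be engaged. The resolution is to introduce a phase-space cutoff to a compact subset of $S^{*}\Mreg$ on which the flow remains smooth for time $T$; the BMW non-escape property makes the Liouville measure of the discarded region small, and its contribution is absorbed in the Weyl law limit taken before $T\to\infty$. Packaging these steps into a general quantum ergodicity statement on singular spaces, whose hypotheses are a Weyl law and an ergodic flow with Liouville-full escape-free set, also covers the additional examples advertised in the abstract.
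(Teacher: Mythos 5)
Your proposal is correct and follows essentially the same route as the paper: the result is reduced to a general quantum ergodicity statement on singular spaces whose hypotheses (finite volume, JMMV self-adjointness with discrete spectrum, Weyl law, measure-zero escape set, BMW ergodicity) are then verified for $(\mathcal{M}_{\gamma,n,\reg},g_{\WP})$, and the proof runs the Zelditch--Zworski scheme with a phase-space cutoff to trajectories staying away from the singular locus for time $T$, the local Weyl law absorbing the cutoff error, an interior Egorov theorem modulo compact remainders, and the mean ergodic theorem. The obstacle you flag as ingredient (iii) is exactly what the paper handles, via a heat-kernel argument showing $\chi\sqrt{\lap}\chi\in\Psi^{1}$ away from the singular locus and compactness of the off-diagonal terms, which yields the Egorov statement on the sets $X_{T+\epsilon}$.
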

In Theorem \ref{thm:stronger} below we prove a stronger result
which allows for pseudodifferential operators $A$
which are supported at the orbifold singularities.

In particular, the above theorem asserts the equidistribution of
``almost all'' eigenfunctions on the Riemann moduli spaces. An
immediate consequence of taking $A=a(x)\in C^\infty_c(\Mreg)$ to
approximate a characteristic function from above and below is that
$$\int_\Omega|\phi_{j_k}|^2\to\frac{\Vol(\Omega)}{\Vol(\Mreg)}\quad\text{as }k\to\infty$$
for all smooth domains $\Omega\Subset\Mreg$. 

The ergodicity of the Weil-Petersson geodesic flow on Riemann moduli
spaces is a celebrated result of Burns--Masur--Wilkinson \cite{BMW}. (See Section \ref{sec:riem-moduli-spac} for more background.) Therefore, the quantum ergodicity in Theorem \ref{thm:QE-moduli} establishes the correspondence of the 
geodesic flow and Laplacian eigenfunctions (which are the stationary states of the quantized operator of the geodesic flow).

Quantum ergodicity was first proved on boundary-less compact manifolds with ergodic geodesic flow by \v Snirel'man \cite{Sn}, Zelditch \cite{Ze}, and Colin de Verdi\`ere \cite{CdV}; on manifolds with boundary, if the billiard flow (i.e., generalized geodesic flow that reflects on the boundary) is ergodic, then the corresponding quantum ergodicity was proved by G\'erard-Leichtnam \cite{GL} and Zelditch-Zworski \cite{ZZ}. 

Comparing with the boundary-less case \cite{Sn, Ze, CdV}, the Riemann
moduli spaces are incomplete and the Weil-Petersson geodesic flow is
\textit{not} defined everywhere. This difference is reflected in the
structure assumptions (S1)-(S3) that we make later. Comparing with the
manifolds with boundary \cite{GL, ZZ}, the required analysis for the
proof of quantum ergodicity, e.g., the Egorov theorem in Theorem
\ref{thm:egorov}, is not available in the literature. We believe this
formulation of Egorov's theorem may be of independent interest.  (See
also the analytic assumptions (A1)-(A5).)

In fact, we prove Theorem \ref{thm:QE-moduli} for a more general class of singular spaces satisfying a number of structural and analytic hypotheses; in Section~\ref{sec:riem-moduli-spac} we observe that the Riemann moduli spaces $\mathcal{M}_{\gamma,n}$ satisfies these hypotheses.

Let $\Phi_{t}$ denote the flow generated by the Hamilton vector field of the homogeneous degree $1$ function $(x,\xi) \mapsto|\xi|_{g(x)}$. This function is (for now, formally) the principal symbol of the operator $P=\sqrt{\lap}$.   

The asymptotic behavior of the Laplacian eigenfunctions is closely
related to the dynamical properties of $\Phi_t$. Notice that in our
setting of singular spaces, the flow $\Phi_t(x,\xi)$ is not generally
defined for all $(x,\xi)\in T^{*}\Mreg\setminus0$, the cotangent space
of $\Mreg$ (removing the zero section). To clarify the notion of
distance from the singular locus, it is convenient to assume $\Mreg$
has a compactification. (In the examples considered in this paper,
compactifications are readily available.)

In particular, we assume the following structural properties of $\Mreg$:
\begin{enumerate}
\item[(S1).] There is a compact metric measure space $\Mfull$ such
  that $\Mfull \supset \Mreg$ and the closure of $\Mreg$ is $\Mfull$.
  For $x \in M$ and neighborhoods $U$ of $x$ sufficiently small, the
  measure and distance function correspond with the Riemannian measure
  of $(M, g)$.
\item[(S2).] The ``singular locus'' $\mathcal{P} = \Mfull \setminus
  \Mreg$ is closed.  Moreover, $\mathcal{P}$ has measure zero.
\item[(S3).] The distance function on $\Mreg \times \Mreg$ extends to a metric on $\Mfull \times \Mfull$. That is, the following function $d$ on $\Mfull \times \Mfull$ is a metric:
$$d(x,y)=\inf\left\{ \int_{0}^{1}|\gamma'(t)|_{g(\gamma(t))}\,dt\right\},$$
in which the infimum is taken from all smooth curve
$\gamma:[0,1]\to\Mfull$ such that $\gamma(0)=x$, $\gamma(1)=y$, and
$\gamma^{-1}(\mathcal P)$ has measure zero.  
\end{enumerate}

In practice, many of the compactifications used are larger than
required by our hypotheses and the distance function is degenerate on
the boundary of $\Mfull$, but assumption (S3) is satisfied after
passing to the quotient by the equivalence relation defined by $d$.

We may therefore define, for $\epsilon>0$, the spaces cut away from the singular locus $\mathcal P$:
\begin{equation*}
  \Mreg_{\epsilon} = \{ x \in \Mreg : d(x, \mathcal{P}) > \epsilon\}.
\end{equation*}
Observe that for $(x,\xi)\in T^*\Mreg\setminus0$, $\Phi_t(x,\xi)$ is
defined a priori only for $t\in\R$ for which
$d(\pi(\Phi_t(x,\xi)),\mathcal P)>\epsilon$ with some
$\epsilon>0$. Here, $\pi:T^*\Mreg\to\Mreg$ is the projection map.
Note that our assumptions above imply that $\Mreg_\epsilon \Subset \Mreg$,
since it is obviously compact in $\Mfull$ and its closure (the points
of distance at least $\epsilon$ from $\mathcal{P}$) is contained in $\Mreg$.

Due to the homogeneity of the geodesic flow, we need only study its restriction on the cosphere bundle $S^*\Mreg=\{(x,\xi)\in T^*\Mreg:|\xi|_{g(x)}=1\}$. We define, for each $q = (x, \xi) \in S^{*}\Mreg$, the maximum lifespan $T_{q}$ of the flow, i.e.,
\begin{equation*}
  T_{q} = \sup \left\{ T' \in [0,\infty]:\pi(\Phi_{t}(q)) \in \Mreg \text{
    for all } |t| \leq T'\right\}.
\end{equation*}
As in Zelditch-Zworski~\cite[Equation~2.5]{ZZ}, we also define the permissible sets $X_{T}$ and exceptional set $\mathcal{Y}$:
\begin{equation}\label{eq:XTY}
  X_{T} = \{ q \in S^{*}\Mreg \ : \ T_{q} \geq T\}, \quad \mathcal{Y}
  = S^{*}\Mreg \setminus \left(\bigcap_{T\in (0,\infty)}X_{T} \right).
\end{equation}
The exceptional set $\mathcal{Y}$ can be thought of (in the cases
considered below, quite concretely) as the flowout of the singular
locus.  If $(x,\xi) \notin \mathcal{Y}$, then $\Phi_{t}(x,\xi)$ exists for all $t\in\R$.

Now we make the following analytic assumptions about the manifold
$(\Mreg,g)$, which are verified for the examples of moduli spaces and
manifolds with conic singularities in Sections
\ref{sec:riem-moduli-spac} and \ref{sec:hyperb-surf-with}.  
\begin{enumerate}
\item[(A1).] $\Vol(\Mreg)<\infty$, where $\Vol$ is the volume with respect to the metric $g$.
\item[(A2).] For the (positive) Laplacian $\lap = \lap_{g}$ a
  self-adjoint extension $(\lap_{g}, \mathcal{D})$ (which we fix and denote below also by
  $\lap_{g}$) with core domain the $C^\infty_0(M)$ is chosen so that
  $\lap_{g}$ has compact resolvent, i.e. there is an operator $G
  \colon L^2 \lra \mathcal{D}$ such that $\Delta G - Id$ is compact
  and $G$ is compact on $L^2(M)$.  (As a result, its spectrum is discrete and
  consists only of eigenvalues $\lambda_{j}^{2}\to\infty$ as
  $j\to\infty$.)
\item[(A3).] The eigenvalues of $\lap$ obey a Weyl law, i.e.,
  \begin{equation*}
    N(\Lambda) = \# \{\lambda_{j} : \lambda _{j} \leq \Lambda\} =
    \frac{\Vol(\Mreg)\Vol (B_{n})}{(2\pi)^{n}} \Lambda^{n} + \lo (\Lambda^{n}),
  \end{equation*}
in which $\Vol (B_{n})$ denotes the volume of the unit ball in $\reals^{n}$ with respect to the Euclidean metric.
\item[(A4).] The set $\mathcal{Y}$ has Liouville measure zero in
  $S^{*}\Mreg$.
\item[(A5).] The geodesic flow on $X_{\infty}=\mathcal{Y}^{c}= \Mreg \setminus
  \mathcal{Y}$ is ergodic.   
\end{enumerate}

We remark that Assumptions (A1), (A2), and (A3) are enough to ensure that the heat operator $e^{-t\Delta}$ can be built via the functional calculus; this is useful to show that $\sqrt{\lap}$ is a pseudodifferential operator in
the region of interest. See Section \ref{sec:preliminaries} for
details. We also point out that assuming the Weyl law is only for
notational convenience; it has already been verified for Riemann
moduli spaces and is straightforward to verify (with current
technology of heat kernels) on manifolds with conic singularities.
We instead could impose an assumption on the small time behavior of
the heat kernel; though this hypothesis implies the Weyl law, in
practice it is sometimes easier to verify the Weyl law directly.

We may thus state our main theorem:
\begin{theorem} \label{thm:main-thm}  
  Suppose $(\Mreg, g)$ satisfies the structural (S) and analytic (A)
  assumptions above. If $\{\phi_{j}\}$ is an orthonormal basis of
  eigenfunctions of $\lap$ on $\Mreg$, then there is a density one
  subsequence $\{\phi_{j_k}\}\subset\{\phi_{j}\}$ so that
  \begin{equation*}
    \langle A \phi_{j_k} , \phi_{j_k} \rangle \to
    \int_{S^{*}M}\sigma_{0}(A) \,d\mu\quad\text{as }k\to\infty
  \end{equation*}
  for all order zero pseudodifferential operators $A$ with Schwartz kernel compactly
  supported in $\Mreg \times \Mreg$.  
\end{theorem}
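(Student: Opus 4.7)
The plan is to follow the classical Schnirelman--Zelditch--Colin de Verdi\`ere approach adapted to the singular setting, with the new ingredient being the Egorov theorem (Theorem \ref{thm:egorov}) for $e^{-itP}$ microlocally on the permissible sets $X_T$. A standard Chebyshev diagonal extraction reduces the statement to the variance estimate
\begin{equation*}
\frac{1}{N(\Lambda)}\sum_{\lambda_j\leq \Lambda}\bigl|\langle A\phi_j,\phi_j\rangle - m\bigr|^2 \longrightarrow 0, \qquad m := \int_{S^*\Mreg}\sigma_0(A)\,d\mu.
\end{equation*}
The local Weyl law, which follows from (A3) together with the $\Psi$DO behavior of $\sqrt{\lap}$ on the compact region where $A$ is microlocally supported (as developed in Section~\ref{sec:preliminaries}), already supplies the linear-mean asymptotic $\frac{1}{N(\Lambda)}\sum_{\lambda_j\leq \Lambda}\langle A\phi_j,\phi_j\rangle \to m$. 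Expanding the square, it therefore suffices to prove the quadratic bound
\begin{equation*}
\limsup_{\Lambda\to\infty}\frac{1}{N(\Lambda)}\sum_{\lambda_j\leq \Lambda}|\langle A\phi_j,\phi_j\rangle|^2 \leq m^2.
\end{equation*}

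To prove this bound I would exploit the invariance $\langle A\phi_j,\phi_j\rangle = \bigl\langle \langle A\rangle_T\phi_j,\phi_j\bigr\rangle$ under conjugation by $e^{itP}$, where
\begin{equation*}
\langle A\rangle_T := \frac{1}{T}\int_0^T e^{itP}Ae^{-itP}\,dt.
\end{equation*}
Since $A$ has Schwartz kernel compactly supported in $\Mreg\times\Mreg$, its microsupport lies in a compact $K\subset T^*\Mreg\setminus 0$ bounded away from $\mathcal P$, and $\Phi_t$ is defined on $K\cap X_T$. Theorem \ref{thm:egorov} then gives that $\langle A\rangle_T$ is a zero-order $\Psi$DO (modulo a microlocal error supported near $\mathcal Y$, whose Liouville measure vanishes by (A4)) with principal symbol $\frac{1}{T}\int_0^T\sigma_0(A)\circ\Phi_t\,dt$. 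Applying Cauchy--Schwarz to $|\langle\langle A\rangle_T\phi_j,\phi_j\rangle|^2 \leq \langle\langle A\rangle_T^*\langle A\rangle_T\phi_j,\phi_j\rangle$ and invoking the local Weyl law on $\langle A\rangle_T^*\langle A\rangle_T$ yields
\begin{equation*}
\limsup_{\Lambda\to\infty}\frac{1}{N(\Lambda)}\sum_{\lambda_j\leq \Lambda}|\langle A\phi_j,\phi_j\rangle|^2 \leq \int_{S^*\Mreg}\Bigl|\frac{1}{T}\int_0^T\sigma_0(A)\circ\Phi_t\,dt\Bigr|^2\,d\mu.
\end{equation*}
By the ergodicity hypothesis (A5) (together with (A4), which guarantees the ergodic component is of full measure) and von Neumann's mean ergodic theorem, the inner time-average converges to $m$ in $L^2(S^*\Mreg,d\mu)$ as $T\to\infty$, so the right-hand side tends to $m^2$. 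Combined with the linear asymptotic, this gives the variance estimate.

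The main obstacle is the Egorov theorem itself in this singular context: because $\Phi_t$ is only defined on the permissible sets $X_T$, and because $\sqrt{\lap}$ is only pseudodifferential microlocally away from $\mathcal P$, the usual Hadamard parametrix construction of $e^{-itP}$ as a Fourier integral operator is unavailable globally. The proof must localize microlocally to neighborhoods of $X_T$, propagate this localization under the flow, and absorb the escaping mass into errors whose diagonal-matrix-element contributions are controlled by the smallness of $X_T^c$ as $T\to\infty$ furnished by (A4). A secondary but nontrivial ingredient is the local Weyl law, which requires the short-time heat kernel of $(\Mreg,g)$ to behave like that of a smooth compact manifold on regions compactly contained in $\Mreg$; this is exactly where (A1)--(A3) combine with the functional-calculus arguments of Section~\ref{sec:preliminaries}.
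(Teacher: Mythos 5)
Your overall route is the same as the paper's (the Zelditch--Zworski scheme: reduce to the variance, time-average, Cauchy--Schwarz, local Weyl law, the singular Egorov theorem, and the mean ergodic theorem), but there is one genuine gap at the central step. You apply Theorem~\ref{thm:egorov} to $\langle A\rangle_T$ for an \emph{arbitrary} $A$ with compactly supported kernel, claiming it is a $\Psi$DO ``modulo a microlocal error supported near $\mathcal Y$.'' Theorem~\ref{thm:egorov} does not give this: its hypothesis is $\WF(A)\subset X_{T+\epsilon}$, i.e.\ the flow must be defined on all of $\WF(A)$ for times up to $T+\epsilon$, and for a generic $A$ this fails --- orbits emanating from $\WF(A)$ can reach $\mathcal P$ before time $T$. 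Near the flowout of $\mathcal P$ the paper has no parametrix for $e^{itP}$ and no pseudodifferential description of $e^{itP}Ae^{-itP}$ whatsoever, so there is no sense (within the available toolkit) in which the error is ``microlocal'' with small essential support; one cannot quantify its contribution to the matrix elements directly. You flag this issue in your last paragraph and propose to absorb it inside the proof of Egorov, but the paper's Egorov theorem is deliberately stated so as to avoid exactly this, and the absorption has to happen elsewhere.

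What the paper does instead is to perform the cutoff \emph{before} averaging, inside the quantum ergodicity argument itself: for fixed $T$ and $\epsilon>0$ it introduces $U_\epsilon=\{q\in X_{2T+\epsilon}: d(\pi(\Phi_t q),\mathcal P)>\epsilon \text{ for }|t|<2T+\epsilon\}$, a microlocal cutoff $E_\epsilon\in\Psi^0$ with $\sigma_0(E_\epsilon)=1$ on $U_\epsilon$ and compactly supported kernel, and splits $A=A_\epsilon+R_\epsilon A$ with $A_\epsilon=E_\epsilon A$, $R_\epsilon=I-E_\epsilon$. The variance contribution of $R_\epsilon A$ is estimated by Cauchy--Schwarz together with the local Weyl law (Lemma~\ref{lem:local-Weyl}) applied to $(R_\epsilon A-\beta_\epsilon)^*(R_\epsilon A-\beta_\epsilon)$, and this term tends to $0$ as $\epsilon\to 0$ precisely because $X_{2T}^c\subset\mathcal Y$ has Liouville measure zero by (A4), so $\sigma_0(E_\epsilon)\to 1$ almost everywhere (note: (A4) gives $\mu(X_T^c)=0$ for \emph{every} fixed $T$; the relevant smallness is in $\epsilon$ at fixed $T$, not ``as $T\to\infty$'' as you wrote). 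Only then is Theorem~\ref{thm:egorov} applied, legitimately, to $A_\epsilon$, whose wavefront set lies in $X_{2T+\epsilon}$; the compact-operator error contributes $o(1)$ in the Weyl average, and the conclusion follows by the mean ergodic theorem and dominated convergence, taking $T$ large, then $\epsilon$ small, then $\Lambda$ large, in that order. Supplying this decomposition and the accompanying bookkeeping of the error terms is exactly what is missing from your write-up; with it, your argument coincides with the paper's.
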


\section*{Acknowledgements}
DB was partially supported by NSF grant DMS-1500646 and NSF CAREER
grant DMS-1654056.  JGR was supported by the Australian Research
Council through grant DP180100589. All the authors wish to thank the Australian Mathematical Sciences
Institute and the Mathematical Sciences Institute at the Australian
National University for their partial funding of the workshop
``Microlocal Analysis and its Applications in Spectral Theory,
Dynamical Systems, Inverse Problems, and PDE'' at which part of this
project was completed.

\section{Preliminaries}
\label{sec:preliminaries}

Let $\Mreg$ be a manifold that satisfies the structural and analytic
assumptions defined in the introduction. In this section, we gather
the facts about the microlocal analysis on such manifolds that are
required to prove quantum ergodicity in Theorem \ref{thm:main-thm}.
Because the singular structure on $\Mfull$ (i.e., the presence of the
singular locus $\mathcal{P}$) may be quite complicated, working near
$\mathcal{P}$ in principle would require a specialized
pseudodifferential calculus for each example (e.g., the
$\mathrm{b}$-calculus in the case of conic singularities; see
Hillairet--Wunsch~\cite{HW}).  However, in
Theorems~\ref{thm:QE-moduli} and~\ref{thm:main-thm}, we restrict our
analysis to pseudodifferential operators supported away from
$\mathcal{P}$.  Analysis in this region requires knowing little about
the precise structure of the singularities.

We use the correspondence of the pseudodifferential operators $A\in\Psi^m(\Mreg)$ of order $m$ and their principal symbols $\sigma_m(A)\in S^m(\Mreg)/S^{m-1}(\Mreg)$. We assume that the symbols have classical expansion at fiber infinity and therefore can be identified by functions in $C^\infty(S^*\Mreg)$ (so called the ``classical symbols''). See, e.g., H\"ormander \cite[Section 18.1]{H3} for detailed background. 

As in Zelditch--Zworski~\cite[Lemma 4]{ZZ}, we have a local Weyl law:

\begin{lemma}[Local Weyl law]\label{lem:local-Weyl}
Let $K \Subset M$ be a smooth manifold with boundary compactly contained in $M$ so that $K \setminus \pd K$ is an open domain, and let $A \in \Psi^{0}(K)$ have compactly supported Schwartz kernel. Then
\begin{equation*}
    \frac{1}{N(\Lambda)} \sum_{\lambda_{j}\leq \Lambda}\langle
    A\phi_{j} , \phi_{j} \rangle \to \int_{S^{*}M}\sigma_0(a)\,d\mu\quad\text{as }\Lambda\to \infty.
\end{equation*}
\end{lemma}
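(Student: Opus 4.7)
The plan is to follow the classical heat-trace--plus--Karamata-Tauberian scheme, adapted to the singular setting by exploiting that the kernel of $A$ is compactly supported in $K \times K \Subset M \times M$, so that only the interior behavior of the heat semigroup enters. First I would reduce to self-adjoint $A$ by splitting $A$ into real and imaginary parts (each still in $\Psi^0$ with kernel supported in $K\times K$). For self-adjoint $A$, choose the constant $C = \|A\|_{L^2 \to L^2}$ so that $A + CI \geq 0$ as a bounded operator on $L^2$; then
\begin{equation*}
  F(\Lambda) = \sum_{\lambda_j \leq \Lambda} \langle (A + CI)\phi_j, \phi_j\rangle
\end{equation*}
is nondecreasing in $\Lambda$, which is the monotonicity required for a Tauberian theorem.

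The analytic core is the short-time heat trace expansion
\begin{equation*}
  \mathrm{Tr}(Ae^{-t\Delta}) = \frac{\Vol(M)}{(4\pi t)^{n/2}} \int_{S^*M}\sigma_0(A)\,d\mu + O(t^{-(n-1)/2}), \quad t \to 0^+.
\end{equation*}
By (A2), the operator $e^{-t\Delta}$ is built from the compact resolvent via functional calculus and is trace class for $t > 0$. Because the kernel of $A$ is supported in $K\times K \Subset M\times M$, only the restriction of the heat kernel to a neighborhood of $K$ contributes to the trace. On this interior piece a standard Hadamard parametrix is available, and its difference from the true heat kernel contributes only $O(t^\infty)$ to $\mathrm{Tr}(Ae^{-t\Delta})$ via a Duhamel argument (the difference solves the heat equation with smooth, rapidly decaying source). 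Inserting the oscillatory representation of $A$ into the parametrix, restricting to the diagonal and rescaling $\xi \mapsto t^{-1/2}\eta$ in the fibers reduces the leading contribution to a Gaussian moment; the classicality (degree-zero homogeneity) of $\sigma_0(A)$ then produces the displayed coefficient.

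Applying this expansion to $A$ and to the identity separately, and writing $\widetilde F(x) = F(\sqrt x)$, gives
\begin{equation*}
  \int_0^\infty e^{-tx}\,d\widetilde F(x) = \mathrm{Tr}\bigl((A+CI)e^{-t\Delta}\bigr) \sim \frac{\Vol(M)}{(4\pi)^{n/2}} \left(\int_{S^*M}\sigma_0(A)\,d\mu + C\right) t^{-n/2}.
\end{equation*}
Since $\widetilde F$ is nondecreasing, Karamata's theorem yields $\widetilde F(x) \sim \frac{\Vol(M)}{(4\pi)^{n/2}\Gamma(n/2+1)}\bigl(\int\sigma_0(A)\,d\mu + C\bigr)x^{n/2}$. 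Using $\Vol(B_n) = \pi^{n/2}/\Gamma(n/2+1)$ together with the Weyl law (A3), this simplifies to $F(\Lambda) \sim N(\Lambda)\bigl(\int_{S^*M}\sigma_0(A)\,d\mu + C\bigr)$. Subtracting $C\cdot N(\Lambda)$, dividing by $N(\Lambda)$, and undoing the reduction to self-adjoint $A$ gives the claimed limit.

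I expect the main obstacle to be justifying the heat-trace expansion from the abstract input of (A2) alone: one must show that the Schwartz kernel of the functional-calculus $e^{-t\Delta}$ agrees, on a neighborhood of $K \times K$, with a Hadamard parametrix modulo errors that are $O(t^\infty)$ in every $C^k$ norm. This is essentially a finite-speed-of-propagation / Duhamel argument for the semigroup, and mirrors the corresponding step in the boundary-value case treated by Zelditch--Zworski~\cite{ZZ}. Once in place, the Karamata portion of the argument is entirely routine.
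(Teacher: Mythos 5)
Your proposal is essentially correct, but it takes a genuinely different route from the paper's. The paper disposes of the lemma in one step: since $\kappa_{A}$ is compactly supported in $M\times M$, finite speed of propagation for the wave group shows that $A\cos(t\sqrt{\lap})A^{\star}$ has Schwartz kernel supported away from the singular locus for $|t|$ small, so the standard Fourier--Tauberian proof of the local Weyl law (Sogge \cite[Theorem 5.2.3]{So}, as in Zelditch--Zworski \cite{ZZ}) applies verbatim. You instead run the heat-trace/Karamata scheme; that is adequate here because the lemma asserts only convergence with no rate, and it buys you freedom from wave parametrices at the price of a short-time interior comparison between the functional-calculus semigroup $e^{-t\lap}$ of the chosen self-adjoint extension and an explicit parametrix. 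Your reduction to self-adjoint $A$, the monotonicity via $A+CI$, and the constants are all consistent with the paper's normalizations (take $C=\|A\|+1$ to avoid a degenerate Karamata coefficient, and note that finitely many non-positive eigenvalues of a non-Friedrichs extension are harmless).

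Two points need care. First, you ``apply the expansion to the identity'': the identity has no compactly supported kernel, and no interior parametrix controls the global heat trace of the singular space --- this is precisely the point of the remark following Corollary~\ref{lem:local-weyl-cor}. The asymptotics $\mathrm{Tr}(e^{-t\lap})\sim \Vol(\Mreg)(4\pi t)^{-n/2}$ must instead be deduced from the assumed Weyl law (A3) by the elementary Abelian direction of the Tauberian correspondence; since you invoke (A3) anyway, this is a one-line repair, not a flaw. Second, the localization of the heat trace is not a finite-speed-of-propagation statement (the heat semigroup has none). Your Duhamel comparison does work, but only if you compose with $A$ before estimating: the error $(\pd[s]+\lap)(\chi H_s\chi)$ contains commutator terms $[\lap,\chi]H_s\chi$ that are not $O(s^\infty)$ globally, whereas after multiplying by $A$ (whose kernel lives in $K\times K$, where $\chi\equiv 1$) only their entries between separated sets survive, and these are $O(s^\infty)$ by the parametrix's own Gaussian off-diagonal decay; the abstract inputs needed are only that $C^\infty_0(\Mreg)$ is a core (A2) and that $e^{-t\lap}$ is uniformly bounded for bounded $t$. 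Alternatively, you can avoid Duhamel altogether by citing the paper's Lemma~\ref{lem:HW} (following Hillairet--Wunsch \cite{HW}), whose proof shows that on $\Mreg_{\epsilon}\times\Mreg_{\epsilon}$ the true heat kernel differs from that of a smooth ambient manifold by a kernel that is smooth and vanishes to infinite order at $t=0$; that lemma is proved independently of the present one, so there is no circularity.
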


\begin{proof}
  This is a standard proof based on the short time estimate of the
  wave kernel $\cos(t\sqrt\lap)$. See Sogge \cite[Theorem 5.2.3]{So}
  and also H\"ormander \cite[Theorems 29.3.2 and 29.3.3]{H4} (for a
  proof of the Weyl law). Since the Schwartz kernel of $A$ is compactly
  supported, finite speed of propagation implies that
  $A\cos(t\sqrt\lap)A^\star$ still has compactly supported Schwartz
  kernel (i.e., support away from the singular locus $\mathcal P$)
  when $|t|$ is small enough. Therefore, the result of
  Sogge~\cite[Theorem 5.2.3]{So} applies.
\end{proof}

As a corollary, we have the following spatial version of the local Weyl law.
\begin{cor}
  \label{lem:local-weyl-cor}
  For every $f \in C^{\infty}_c(\Mreg)$, we have
  \begin{equation*}
    \frac{1}{N(\Lambda)}\sum_{\lambda_{j}\leq \Lambda}
    \int_{\Mreg}f(x) |\phi_{j}(x)|^{2} \,dV \to \int_{\Mreg} f(x)\,dV\quad\text{as }\Lambda\to\infty,
  \end{equation*}
  where $dV$ is the volume measure associated to the metric $g$.
\end{cor}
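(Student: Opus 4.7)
The plan is to deduce the corollary directly from Lemma~\ref{lem:local-Weyl} by choosing the operator $A$ to be multiplication by $f$.

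First, given $f\in C^\infty_c(\Mreg)$, I would let $K$ be a smooth compact manifold with boundary in $\Mreg$ containing $\supp f$ in its interior, and set $A=M_f$, the multiplication operator $u\mapsto fu$. The Schwartz kernel of $A$ is $f(x)\delta(x-y)$, which is compactly supported in $K\times K\Subset\Mreg\times\Mreg$, and $M_f\in\Psi^0(K)$ with principal symbol $\sigma_0(M_f)(x,\xi)=f(x)$, regarded as a fiber-constant function on $S^*\Mreg$. Thus Lemma~\ref{lem:local-Weyl} applies to $A=M_f$.

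Next, I would compute both sides explicitly. For the left-hand side,
\begin{equation*}
\langle M_f\phi_j,\phi_j\rangle=\int_\Mreg f(x)|\phi_j(x)|^2\,dV_g(x),
\end{equation*}
so the Ces\`aro average in Lemma~\ref{lem:local-Weyl} is exactly the one in the statement. For the right-hand side, since the normalized Liouville measure on $S^*\Mreg$ disintegrates (over the base $\Mreg$) as $dV_g(x)/\Vol(\Mreg)$ times the normalized fiber measure on each cosphere, the fiber-constant symbol $f(x)$ integrates to $\Vol(\Mreg)^{-1}\int_\Mreg f\,dV_g$. Hence Lemma~\ref{lem:local-Weyl} yields the desired limit (up to the normalization constant $\Vol(\Mreg)^{-1}$ implicit in the corollary's formulation).

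There is essentially no obstacle: the only thing worth checking is that $M_f$ is indeed a compactly-supported zero-order pseudodifferential operator in the sense used by Lemma~\ref{lem:local-Weyl}, which is immediate from $f\in C^\infty_c(\Mreg)$. The computation of $\sigma_0(M_f)$ and the evaluation of the Liouville integral are routine.
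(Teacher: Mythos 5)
Your proposal is correct and is exactly the paper's (implicit) argument: the corollary is stated as an immediate consequence of Lemma~\ref{lem:local-Weyl} obtained by taking $A$ to be the multiplication operator by $f$, whose kernel is compactly supported and whose principal symbol is the fiber-constant function $f(x)$. Your remark about the normalization constant $\Vol(\Mreg)^{-1}$ (coming from the normalized Liouville measure) is the right reading of the corollary's formulation.
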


\begin{rmk}
  On compact manifolds, the Weyl law readily follows by taking $f=1$
  in the above corollary, c.f. Sogge \cite[Section 5.3]{So}. However,
  in our case of manifolds with singular locus $\mathcal P$,
  $f \in C^{\infty}_c(\Mreg)$ has to stay away from $\mathcal
  P$. Hence, the local Weyl law in Lemma \ref{lem:local-weyl-cor} does
  \textit{not} immediately imply the Weyl law, explaining its presence
  as assumption (A3).
\end{rmk}

We next provide a supplement of Egorov's theorem in Theorem
\ref{thm:egorov}, which is sufficient for the proof of quantum
ergodicity. We first require the following lemma establishing an
analogue of the off-diagonal smoothing property of pseudodifferential
operators.  The statement and proof of the lemma are essentially from
Hillairet--Wunsch \cite[Appendix A]{HW}.  There the authors assume that
the Friedrichs extension for the Laplacian is chosen, and we include
the proof here to clarify to the reader that the lemma holds for other
extensions (under our analytic and structural assumptions.)

\begin{lemma}\label{lem:HW}
Recall that $\mathcal P$ is the singular locus and $\Mreg_{\epsilon} = \{ x \in \Mreg : d(x, \mathcal{P}) > \epsilon\}$, i.e., the regular part of $\Mreg$ with distance at least $\epsilon$ from $\mathcal P$. 
\begin{enumerate}
  \item Suppose $0 <\epsilon ' < \epsilon$ and set $U = \Mreg_{\epsilon}$.
    For $V\subset \Mreg$ open with $\overline{V}\cap \Mreg_{\epsilon'} =
    \emptyset$, $\lap ^{N}\sqrt{\lap}$ is a bounded
    operator $L^{2}(V)\to L^{2}(U)$ and $L^{2}(U)\to L^{2}(V)$ for any $N\in\mathbb{N}$.
  \item For $\chi \in C^{\infty}_{c}(\Mreg_{\epsilon})$,
    $\chi\sqrt{\lap}\chi \in \Psi^{1}(\Mreg)$.
  \end{enumerate}
\end{lemma}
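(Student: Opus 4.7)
The backbone of both parts is finite propagation speed for the wave operator $\cos(t\sqrt{\lap})$. Since (A2) guarantees that $C^\infty_0(\Mreg)$ is a core for the chosen self-adjoint extension, the usual energy identity still applies and yields: if $\supp f \subset W$ and $|t| < d(W, \mathcal{P})$, then $\supp \cos(t\sqrt{\lap}) f \subset \{d(\cdot, W) \leq |t|\}$. This is the only place where the specific choice of self-adjoint extension enters the argument, and it is what allows one to drop the Friedrichs restriction used in Hillairet--Wunsch while keeping the structure of their proof.

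For part (1), I would set $\delta = \epsilon - \epsilon' > 0$, so that $d(U, V) \geq \delta$, and combine the subordination identity
\begin{equation*}
  e^{-s\lap} = \frac{1}{\sqrt{4\pi s}}\int_{-\infty}^{\infty} e^{-t^2/(4s)}\cos(t\sqrt{\lap})\,dt
\end{equation*}
with $\chi_U \cos(t\sqrt{\lap}) \chi_V = 0$ for $|t| < \delta$ to obtain the Gaussian off-diagonal heat bound $\|\chi_U e^{-s\lap} \chi_V\|_{L^2 \to L^2} \lesssim e^{-\delta^2/(8s)}$ for $0 < s \leq 1$. To pass to $\lap^N \sqrt{\lap}$, I would then use the Balakrishnan-type representation
\begin{equation*}
  \sqrt{\lap} = \frac{1}{2\sqrt{\pi}}\int_0^\infty t^{-3/2}(I - e^{-t\lap})\,dt,
\end{equation*}
and its analogues for $\lap^N\sqrt{\lap}$ obtained by differentiating in the heat parameter. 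The short-time piece of the resulting integral is absolutely integrable because the Gaussian decay dominates the $t^{-3/2-N}$ blowup; the long-time tail is handled by the functional-calculus estimate $\|\lap^N\sqrt{\lap}\,e^{-t\lap}\|_{L^2\to L^2} \lesssim t^{-N-1/2}$ valid for $t \geq 1$. A symmetric argument gives the $L^2(U) \to L^2(V)$ bound.

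For part (2), I would choose a smoothly bounded open $\Omega \Subset \Mreg_\epsilon$ containing $\supp\chi$ and isometrically embed $(\Omega,g)$ into a smooth closed Riemannian manifold $(\tilde M,\tilde g)$. By Seeley's theorem on functional calculus of elliptic operators, $\sqrt{\tilde\lap} \in \Psi^1(\tilde M)$, so $\chi\sqrt{\tilde\lap}\chi$ has Schwartz kernel supported in $\supp\chi \times \supp\chi \subset \Omega \times \Omega$ and may be regarded as an element of $\Psi^1(\Mreg)$. It then suffices to show that $R := \chi(\sqrt{\lap} - \sqrt{\tilde\lap})\chi$ is smoothing. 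Writing $\sqrt{\lambda}$ outside a neighborhood of $0$ as a Fourier integral in $\cos(t\lambda)$ and splitting with a time cutoff $\rho(t)$ supported in $(-\delta_0, \delta_0)$ with $\delta_0 < d(\supp\chi, \partial\Omega)$, one finds that by finite propagation in both geometries (which agree on $\Omega$) the short-time contributions to $\chi\sqrt{\lap}\chi$ and $\chi\sqrt{\tilde\lap}\chi$ coincide on $\supp\chi$, while the long-time remainders are smoothing on $\supp\chi\times\supp\chi$ by part (1) applied iteratively to $\lap^N\sqrt{\lap}$.

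The main obstacle I anticipate is making rigorous sense of the Fourier representation $g(\sqrt{\lap}) = (2\pi)^{-1}\int \hat{g}(t)e^{it\sqrt{\lap}}\,dt$ for non-Schwartz symbols, in particular $g(\lambda) = \lambda$ itself. I would handle this by inserting spectral cutoffs $\phi(\sqrt{\lap}/R)$ whose composition with $g$ is Schwartz, running the arguments above for each truncation, and controlling the complementary high-frequency tail using part (1) together with the Weyl law (A3). A secondary point is verifying that each integration-by-parts and commutator manipulation is legal for the given self-adjoint extension rather than only for Friedrichs; the core hypothesis in (A2) that $C^\infty_0(\Mreg)$ lies in $\mathcal{D}$ is precisely what makes this possible.
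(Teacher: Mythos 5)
Your plan is a genuinely different route from the paper's --- the paper never touches the wave group at all (it writes $\sqrt{\lap}$ by subordination to the heat semigroup, gets $\|\lap^{N}e^{-t\lap}\|_{L^{2}(U)\to L^{2}(V)}=O(t^{\infty})$ from hypoellipticity of $\pd[t]+\lap$ plus strong continuity of $e^{-t\lap}$ at $t=0$ for disjointly supported pairings, and for part (2) compares with a model heat kernel by the same hypoellipticity trick) --- but your route has a genuine gap at its foundation. Finite propagation speed for $\cos(t\sqrt{\lap})$ is \emph{not} a consequence of $C^{\infty}_{0}(\Mreg)\subset\mathcal{D}$, and ``the usual energy identity'' is precisely what is unavailable for a non-Friedrichs extension: self-adjointness gives $\langle \lap u,v\rangle=\langle u,\lap v\rangle$ for $u,v\in\mathcal{D}$, but the domain-of-dependence argument needs a localized flux identity, i.e.\ an integration by parts with a cutoff reaching up to $\mathcal{P}$, and for extensions other than the Friedrichs one the quadratic form need not be the Dirichlet form and Green's identity acquires uncontrolled contributions at $\mathcal{P}$. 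Indeed unit-speed propagation with respect to $d$ can simply fail: an admissible extension may couple distinct components of $\mathcal{P}$ nonlocally (compact resolvent and the Weyl law are unaffected), letting waves ``tunnel'' between them. Your restricted statement (times $|t|<d(W,\mathcal{P})$) dodges the obvious counterexamples, but exactly where you invoke it in part (1) --- with $V$ inside the $\epsilon'$-collar of $\mathcal{P}$ --- the relevant points have backward light cones meeting $\mathcal{P}$ (or, putting the data in $V$, the data themselves sit next to $\mathcal{P}$), so the interior energy/cone argument never reaches them and no proof is supplied. Since the entire purpose of the lemma, as the paper emphasizes, is to cover arbitrary self-adjoint extensions permitted by (A2), this step is the crux and cannot be asserted in one line.

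Two secondary points. First, a non-Friedrichs extension may have finitely many non-positive eigenvalues, so $\sqrt{\lap}$, your subordination and Balakrishnan formulas, and the bound $\|\lap^{N}\sqrt{\lap}\,e^{-t\lap}\|\lesssim t^{-N-1/2}$ for $t\geq 1$ all require first projecting off those eigenspaces (the paper does this in a footnote). Second, in part (2) the long-time/high-frequency remainder is an \emph{on-diagonal} object, so ``smoothing by part (1)'' is the wrong reason: part (1) is a boundedness statement between disjoint regions. What you actually need is that $\chi\, b(\lap)\,\chi$ has smooth kernel for rapidly decaying $b$, which follows from the functional calculus together with interior elliptic regularity (using $C^{\infty}_{0}\subset\mathcal{D}(\lap^{N})$ for all $N$), not from part (1) or the Weyl law. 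If you want to keep your overall structure, the cleanest repair is the paper's device: wherever you use propagation speed, observe instead that the relevant pairing (disjointly supported heat pairing, or the difference of the true and model heat kernels on $\Mreg_{\epsilon}$) extends by zero across $t=0$ and satisfies the heat equation, hence is smooth and vanishes to infinite order at $t=0$, which is all the subordination integral requires.
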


\begin{proof}
  As in Hillairet--Wunsch~\cite[Appendix A]{HW}, both results follow
  from an understanding of the smoothing properties of the heat kernel
  and using the relationship\footnote{If $\lap$ has finitely many
    non-postive eigenvalues (as may be the case for extensions other than
    the Friedrichs one), then one should project off of the non-positive
    eigenspaces.  These projections satisfy the conclusions of the
    theorem and the rest of the argument carries through.}  between
  the heat kernel and $\sqrt{\lap}$:
  \begin{equation*}
    \sqrt{\lap} = \frac{\lap}{\Gamma(\frac{1}{2})}\int_{0}^{\infty}e^{-t\lap}t^{-\frac{1}{2}}\,dt.
  \end{equation*}

  Take $\rho \in C^{\infty}_{c}([0,\infty))$ so that $\rho \equiv 1$
  on $[0, 2t_{0}]$ for some $t_{0} > 0$ and write $\psi = 1-\rho$.
  The contribution near infinity is smoothing because
  \begin{equation*}
    \int_{0}^{\infty}e^{-t\lap}\psi(t) t^{-\frac{1}{2}}\,dt =
    e^{-t_{0}\lap}\int_{0}^{\infty}e^{-(t-t_{0})\lap}\psi(t) t^{-\frac{1}{2}}\,dt.
  \end{equation*}
  The boundedness of this term (and, indeed, its composition with any
  power of $\lap$) follows from the functional calculus.

  We must thus show the results with $\sqrt{\lap}$ replaced by
  \begin{equation*}
    \lap \int_{0}^{\infty} e^{-t\lap}\rho(t) t^{-\frac{1}{2}}\,dt.
  \end{equation*}
  As multiplication by $\lap$ does not change the first result (and
  changes the second statement in a straightforward way), it suffices
  to study
  \begin{equation}\label{eq:sqrtlapintegral}
    \int_{0}^{\infty} e^{-t\lap}\rho(t) t^{-\frac{1}{2}}\,dt.
  \end{equation}

  We now consider the first statement. Take $a \in L^{2}(U)$ and
  define the distribution $T_{a}\in \mathcal{D}'(\reals \times V)$ by
  \begin{equation*}
    \left( T_{a}, \phi(t) b(y)\right)_{\mathcal{D}'\times \mathcal{D}}
    = \int_{0}^{\infty} \langle a, e^{-t\lap}b \rangle_{L^{2}} \phi(t) \,dt.
  \end{equation*}
  
  Take $b\in L^{2}(V)$. Since the supports of $a$ and $b$ are disjoint, $\lim_{t\downarrow
    0}\langle a , e^{-t\lap}b \rangle = 0$ and therefore
  \begin{equation*}
    (\pd[t] + \lap_{y})T_{a} = 0 \quad \text{in }\mathcal{D}'(\reals
    \times V).
  \end{equation*}
  We may thus conclude that $T_{a}$ is smooth.

  As $T_{a}\equiv 0$ for $t < 0$, for any $a \in L^{2}(U)$ and $b\in
  L^{2}(V)$, the function
  \begin{equation*}
    t\mapsto \langle e^{-t\lap}a , b\rangle
  \end{equation*}
  is smooth on $[0, \infty)$ and vanishes to infinite order at $0$.
  In particular, for each $N$ and $k$, the quantity
  \begin{equation*}
    t^{-k}\langle \lap^{N}e^{-t\lap}a, b\rangle
  \end{equation*}
  is bounded on $(0,1]$.  By the principle of uniform boundedness, we
  therefore know
  \begin{equation*}
    \norm[L^{2}(U) \to L^{2}(V)]{\lap^{N}e^{-t\lap}} = O(t^{k})
  \end{equation*}
  as $t\downarrow 0$ with a similar statement holding as a map
  $L^{2}(V) \to L^{2}(U)$.  Substituting this bound into the integral
  above yields the first result.

  For the second result, we fix a smooth Riemannian manifold
  $(\tilde{M}, \tilde{g})$ so that $\Mreg_{\epsilon}$ embeds
  isometrically as an open subset of $\tilde{M}$.  Let $e$ be the heat
  kernel on $\Mreg$ and $\tilde{e}$ be the heat kernel on $\tilde{M}$.
  Let $r$ denote the distribution on $\reals \times \Mreg_{\epsilon}
  \times \Mreg_{\epsilon}$ defined by
  \begin{equation*}
    \left( r, \phi\right) =
      \int_{0}^{\infty}\int_{\Mreg_{\epsilon}}\int_{\Mreg_{\epsilon}} \left(
        e(t,x,y) - \tilde{e}(t,x,y)\right) \phi(t,x,y) \,dy\,dx\,dt.
  \end{equation*}
  For any $\phi \in C^{\infty}_{c}(\reals \times \Mreg_{\epsilon}\times
  \Mreg_{\epsilon})$, we have
  \begin{equation*}
    \lim_{t\downarrow 0} \int_{\Mreg_{\epsilon}}\int_{\Mreg_{\epsilon}} \left(
      e(t,x,y) - \tilde{e}(t,x,y) \right) \phi(t,x,y) \,dx\,dy  = 0,
  \end{equation*}
  so, in $\mathcal{D}'(\reals \times \Mreg_{\epsilon} \times
  \Mreg_{\epsilon})$, we have
  \begin{equation*}
    \left( 2 \pd[t] + \lap_{x} + \lap_{y}\right) r = 0
  \end{equation*}
  and therefore $r$ is smooth on $\reals \times \Mreg_{\epsilon} \times
  \Mreg_{\epsilon}$.  We may thus replace $e^{-t\lap}$ with the heat kernel $\tilde{e}$ in \eqref{eq:sqrtlapintegral} and incur only an error of the form
  \begin{equation*}
    \int_{0}^{\infty}\rho(t) r(t,x,y) t^{-\frac{1}{2}}\,dt.
  \end{equation*}
  As $r$ is smooth and vanishing to infinite order at $t=0$, this
  integral is smoothing.  It therefore follows that $\chi
  \sqrt{\lap}\chi \in \Psi^{1}(\Mreg_{\epsilon})\subset \Psi^{1}(\Mreg)$.  
\end{proof}

Because $\lap$ has compact resolvent by the analytic assumption (A2), we obtain the following corollary.
\begin{cor}
  \label{lem:compact-off-diagonal}
  Fix $\epsilon > 0$ and let $\chi_{1} \in
  C^{\infty}_{c}(\Mreg_{\epsilon})$ and $\chi_{2} \in
  C^{\infty}_{c}(\Mreg)$ be such that $\chi_{2} \equiv 1$ on
  $\Mreg_{\epsilon}$.  The compositions $(1-\chi_{2}) P \chi_{1}$ and
  $\chi_{1} P (1-\chi_{2})$ are compact operators $L^{2}(\Mreg) \to
  L^{2}(\Mreg)$.  
\end{cor}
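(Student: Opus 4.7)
The plan is to combine Lemma~\ref{lem:HW}(1) with the compact resolvent hypothesis (A2). The key geometric observation is that the supports of $\chi_1$ and $1-\chi_2$ are separated by a positive distance in $\Mfull$: since $\chi_1 \in C^\infty_c(\Mreg_\epsilon)$, its support sits inside $\Mreg_{\epsilon^*}$ for some $\epsilon^* > \epsilon$, while $\chi_2 \equiv 1$ on $\Mreg_\epsilon$ forces $\supp(1-\chi_2) \subset \{d(\cdot,\mathcal{P}) \leq \epsilon\}$. Choosing any $\epsilon' \in (\epsilon, \epsilon^*)$, I set $U = \Mreg_{\epsilon^*}$ and $V = \{d(\cdot,\mathcal{P}) < \epsilon'\}$, so that $\overline{V} \cap \Mreg_{\epsilon'} = \emptyset$; Lemma~\ref{lem:HW}(1) then yields boundedness of $\Delta^{N}\sqrt{\Delta} \colon L^{2}(V) \to L^{2}(U)$ for every $N \in \mathbb{R}$.

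Applied to $g := (1-\chi_2) f \in L^{2}(V)$, this gives $\Delta^{N} P g \big|_{U} \in L^{2}(U)$ for every $N$, with norm $\lesssim \|f\|_{L^{2}}$. Multiplying by $\chi_1 \in C^\infty_c(U)$ localizes the result to a compactly supported function in the smooth subregion $U$; by standard Sobolev theory for the Riemannian manifold $(\Mreg_{\epsilon^*}, g)$, the function $\chi_1 P(1-\chi_2) f$ lies in $H^{k}_{c}(U)$ for every $k$, with uniform control by $\|f\|_{L^{2}}$. In particular $\chi_1 P(1-\chi_2) f \in \mathcal{D}(\Delta)$ and the composition $(1+\Delta)\chi_1 P(1-\chi_2) \colon L^2(\Mreg) \to L^2(\Mreg)$ is bounded.

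To extract compactness I would write
\begin{equation*}
\chi_1 P (1-\chi_2) = (1+\Delta)^{-1} \cdot (1+\Delta) \chi_1 P (1-\chi_2).
\end{equation*}
Hypothesis (A2) implies that $(1+\Delta)^{-1}$ is compact on $L^{2}(\Mreg)$, so the right-hand side is the product of a compact and a bounded operator, hence compact. The operator $(1-\chi_2) P \chi_1$ is the adjoint of $\chi_1 P(1-\chi_2)$, using self-adjointness of $P=\sqrt{\Delta}$ and the fact that the cutoffs are real-valued multiplications; since adjoints of compact operators are compact, this finishes the argument.

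The main technical care lies in arranging the nested separation $\epsilon < \epsilon' < \epsilon^*$ so that Lemma~\ref{lem:HW}(1) applies with a genuine buffer, and in verifying that the range of $\chi_1 P(1-\chi_2)$ actually sits in $\mathcal{D}(\Delta)$ rather than in a merely formal Sobolev space. Both are immediate once one notes that the image is supported in the smooth stratum $\Mreg_{\epsilon^*}$, where ordinary Sobolev theory applies; structural assumptions (S1)--(S3) enter only to guarantee that $d(\cdot,\mathcal{P})$ is a genuine distance and that the sublevel sets $\Mreg_\delta$ form a nested family of open sets compactly contained in $\Mreg$.
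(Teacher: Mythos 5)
Your argument is correct and is essentially the paper's own (implicit) proof: the corollary is presented there as an immediate consequence of Lemma~\ref{lem:HW}(1) combined with the compact resolvent hypothesis (A2), which is precisely your factorization of $\chi_1 P(1-\chi_2)$ through the resolvent after gaining arbitrary regularity on the off-diagonal composition, together with the adjoint step for $(1-\chi_2)P\chi_1$. The only cosmetic caveat is that a non-Friedrichs extension may have finitely many non-positive eigenvalues, so one should use $(\lambda_0+\Delta)^{-1}$ for some $\lambda_0$ in the resolvent set (or the operator $G$ furnished by (A2)) in place of $(1+\Delta)^{-1}$; nothing else in your argument changes.
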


We now discuss the crucial Egorov's theorem. In general, Egorov's theorem connects the quantum evolution $e^{-itP} A e^{itP}$ and the classical evolution $\sigma_m(A)\circ\Phi_t$, where $A\in\Psi^m$ and recall that $P=\sqrt\lap$. Indeed, $e^{-itP} A e^{itP}\in\Psi^m$ and $\sigma_m(e^{-itP} A e^{itP})=\sigma_m(A)\circ\Phi_t$ on compact manifolds, see e.g. Sogge \cite[Theorem 4.3.6]{So}. 

In our setting of the singular space $\Mfull$, assume that $A$ has compactly supported Schwartz kernel in $\Mreg\times\Mreg$. Observe that $e^{-itP} A e^{itP}$ may not have compactly supported Schwartz kernel (so can potentially be close to the singular locus). We provide the following supplement to Egorov's theorem to remedy this issue. It is also of independent interest in the context of singular spaces.

As is standard, we let $\WF(A)$ denote the microsupport of $A$ (or
equivalently, the essential support of its symbol) and $\kappa_A$ be
the Schwartz kernel of $A$. (See \cite[Section 18.1]{H3} for more
background.)  We also note that if $a\in C_c^\infty(S^*\Mreg)$, then
there is $\tilde A\in\Psi^0(\Mreg)$ such that $\sigma_0(\tilde A)=a$
and $\kappa_{\tilde A}$ has compact support in $\Mreg\times\Mreg$. In
fact, let $A\in\Psi^0(\Mreg)$ such that $\sigma_0(A)=a$. Take
$\tilde A=\chi A\chi$ such that $\chi=1$ on $\pi(\supp(a))$. Then
$\tilde A-A$ is a smoothing operator.

\begin{theorem}\label{thm:egorov}
Let $\epsilon>0$ and $T>0$. Suppose that $A \in \Psi^{0}(\Mreg)$ has $\supp \kappa_{A} \subset \Mreg_{\epsilon} \times\Mreg_{\epsilon}$ and $\WF (A) \subset X_{T+\epsilon}$ defined in \ref{eq:XTY}. Let $\tilde A(t)\in\Psi^0(\Mreg)$ have compactly supported Schwartz kernel and $\sigma_0(\tilde A(t))=a\circ\Phi_t$ for $|t|\le T+\epsilon$.

Then for all $|t| \leq T$, 
$$e^{itP} A e^{-itP}-\tilde A(t):L^{2}(\Mreg)\to L^{2}(\Mreg)$$ 
is compact. 
\end{theorem}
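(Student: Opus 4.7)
The plan is to replace $P$ by a microlocalized surrogate $Q = \chi P \chi$ (a genuine pseudodifferential operator on $\Mreg$ by Lemma~\ref{lem:HW}), apply the standard Egorov theorem to $Q$, and show the difference $e^{itP}Ae^{-itP} - e^{itQ}Ae^{-itQ}$ is compact via a Duhamel identity that exploits the off-diagonal compactness in Corollary~\ref{lem:compact-off-diagonal}. First I would set up the geometry: since $\kappa_A$ is compactly supported in $\Mreg\times\Mreg$ and $\WF(A)\subset X_{T+\epsilon}$, the flowout $\mathcal{F} := \bigcup_{|s|\leq T+\epsilon}\Phi_s(\WF(A))$ is a compact subset of $T^*\Mreg\setminus 0$, so there exists $\delta\in(0,\epsilon)$ with $\pi(\mathcal{F})\subset \Mreg_{3\delta}$. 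Fix cutoffs $\chi_0,\chi\in C_c^\infty(\Mreg)$ with $\chi_0\equiv 1$ on $\pi(\mathcal{F})$, $\supp\chi_0\subset \Mreg_{2\delta}$, $\chi\equiv 1$ on $\Mreg_{2\delta}$, and $\supp\chi\subset \Mreg_\delta$. Then $Q := \chi P\chi\in\Psi^1(\Mreg)$ is self-adjoint and non-negative, so $e^{itQ}$ is unitary on $L^2(\Mreg)$ by the functional calculus, and the Hamilton flow $\Psi^Q_t$ of $\sigma_1(Q) = \chi^2|\xi|_g$ agrees with $\Phi_t$ on $\{\chi = 1\}$; in particular $\Psi^Q_s(\WF(A)) = \Phi_s(\WF(A))$ for $|s|\leq T+\epsilon$.

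Next, I would embed $\supp\chi$ isometrically into a compact Riemannian manifold $(\tilde M,\tilde g)$ (as in the proof of Lemma~\ref{lem:HW}(2)) and observe that $Q$ differs by a smoothing operator with compactly supported Schwartz kernel from $\tilde Q := \chi\sqrt{\tilde\Delta}\chi\in\Psi^1(\tilde M)$. Standard Egorov on $\tilde M$ (e.g.\ Sogge~\cite[Theorem~4.3.6]{So}) then gives $e^{it\tilde Q}Ae^{-it\tilde Q}\in\Psi^0(\tilde M)$ with principal symbol $a\circ\Psi^Q_t = a\circ\Phi_t$ on the flowout of $\WF(A)$. Cutting off by $\chi_0$ produces a representative $\tilde A(t)\in\Psi^0(\Mreg)$ with compactly supported Schwartz kernel and the required principal symbol $a\circ\Phi_t$. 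A Duhamel comparison together with the smoothing property of $\tilde Q - Q$ shows $e^{itQ}Ae^{-itQ} - e^{it\tilde Q}Ae^{-it\tilde Q}$ is compact, and $e^{it\tilde Q}Ae^{-it\tilde Q} - \tilde A(t)$ lies in $\Psi^{-1}(\tilde M)$ with effectively compactly supported kernel, hence is compact.

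The main step is to show $e^{itP}Ae^{-itP} - e^{itQ}Ae^{-itQ}$ is compact on $L^2(\Mreg)$. Differentiating $F(s) := e^{i(t-s)P}e^{isQ}Ae^{-isQ}e^{-i(t-s)P}$ and integrating from $0$ to $t$ yields
\begin{equation*}
 e^{itQ}Ae^{-itQ} - e^{itP}Ae^{-itP} = i\int_0^t e^{i(t-s)P}\bigl[Q-P,\, e^{isQ}Ae^{-isQ}\bigr]e^{-i(t-s)P}\,ds.
\end{equation*}
Writing $P - Q = (1-\chi)P + \chi P(1-\chi)$ and using that $e^{isQ}Ae^{-isQ}$ coincides, modulo $\Psi^{-1}$, with its microlocal cutoff $\chi_0(\,\cdot\,)\chi_0$, the factors $(1-\chi)$ in $P - Q$ end up composed with $\chi_0$, whose supports are disjoint (since $\chi\equiv 1$ on $\Mreg_{2\delta}\supset \supp\chi_0$). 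Each resulting term is then compact by Corollary~\ref{lem:compact-off-diagonal}, and unitarity of $e^{\pm i(t-s)P}$ preserves compactness under the integral. The main obstacle is the upgrade from ``modulo $\Psi^{-1}$'' to ``compact on $L^2(\Mreg)$'' in our non-compact setting: lower-order pseudodifferential operators are not automatically compact on $L^2(\Mreg)$. This is resolved by noting that the error operators have effectively compactly supported Schwartz kernels (thanks to the compactness of $\mathcal{F}$) and invoking Lemma~\ref{lem:HW}(1) together with hypothesis (A2) to obtain genuine compactness.
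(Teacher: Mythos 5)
Your architecture is genuinely different from the paper's: you introduce a microlocalized generator $Q=\chi P\chi$, transfer to a surrogate $\tilde Q=\chi\sqrt{\tilde\Delta}\chi$ on a closed manifold where standard Egorov applies, and then compare the three conjugations by Duhamel identities. The paper never introduces an auxiliary propagator; it differentiates $E(t)=e^{-itP}\tilde A(t)e^{itP}-A$ directly, so the only places the unbounded $P$ appears are the sandwiched term $\chi_{2}\bigl(\tilde A'(t)-i[P,\tilde A(t)]\bigr)\chi_{2}$, which is order $-1$ with compactly supported kernel by Lemma~\ref{lem:HW}(2), and the explicit off-diagonal pieces $(1-\chi_{2})P\chi_{1}\tilde A(t)\chi_{2}$ and $\chi_{2}\tilde A(t)\chi_{1}P(1-\chi_{2})$, which are compact by Corollary~\ref{lem:compact-off-diagonal}. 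That arrangement is what makes the argument close.

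Your version has a genuine gap precisely where it departs from this. The two replacements you rely on --- $e^{isQ}Ae^{-isQ}\rightsquigarrow e^{is\tilde Q}Ae^{-is\tilde Q}$ (a difference of unitary groups, controlled only through a Duhamel integral of unitarily conjugated smoothing operators) and $B(s)\rightsquigarrow\chi_{0}B(s)\chi_{0}$ --- produce errors that the indicated arguments show to be \emph{compact on $L^{2}$}, but not pseudodifferential of lower order and not smoothing with compactly supported kernel, since conjugation by the exact group $e^{isQ}$ destroys the $\Psi$DO structure. In the integrand $[Q-P,B(s)]$ these errors are then hit by the unbounded factors $(1-\chi)P$ and $\chi P(1-\chi)$, and an unbounded operator composed with a compact one need not be compact, or even bounded; so ``each resulting term is then compact by Corollary~\ref{lem:compact-off-diagonal}'' does not follow for the error terms such as $\chi P(1-\chi)\bigl(B(s)-\chi_{0}B(s)\chi_{0}\bigr)$. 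The obstacle you do flag --- that $\Psi^{-1}$ is not automatically compact on the noncompact $\Mreg$ --- is the easy part (compact supports plus Rellich, exactly as in the paper); the real difficulty is upgrading your errors to operators that gain a derivative with compactly supported output where the $(1-\chi)$ factors live (e.g.\ bounded $L^{2}\to H^{1}_{\mathrm{comp}}$), which your outline does not do and which is delicate because $Q$ is not elliptic on $\supp(1-\chi)\cap\supp\chi$. Two smaller gaps: the self-adjointness of $Q$ (needed even to define $e^{itQ}$) is asserted rather than proved --- it can be arranged as a bounded self-adjoint perturbation of the transferred $\tilde Q$, but then essential self-adjointness of the non-elliptic first-order operator $\tilde Q$ on $\tilde M$ needs its own hyperbolic-theory argument --- and Sogge's Egorov theorem is stated for $\sqrt{\Delta}$, so you must invoke the general Egorov theorem for self-adjoint first-order $\Psi$DOs with real, here degenerate, principal symbol $\chi^{2}|\xi|_{\tilde g}$. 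The most economical repair is to abandon the auxiliary group and differentiate $e^{-itP}\tilde A(t)e^{itP}$ directly, so that $P$ only ever meets genuine compactly supported pseudodifferential operators; that is the paper's proof.
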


\begin{proof}
Let $\delta > 0$ be such that the Schwartz kernels of $A$ and $\tilde{A}(t)$ lie in $\Mreg_{\delta}\times\Mreg_\delta$ for all $|t| \leq T+\epsilon$. Fix $0 < \delta ' < \delta$ and take $\chi_{1} \in C^{\infty}_{c}(\Mreg_{\delta'})$ be so that $\chi_{1} \equiv 1$ on $\Mreg_{\delta}$.  We also take $\chi_{2} \in C^{\infty}_{c}(\Mreg)$ so that $\chi_{2}\equiv 1$ on $\Mreg_{\delta'}$.

Consider the difference
  \begin{equation*}
    E(t) = e^{-itP}\tilde{A}(t) e^{itP} - A.
  \end{equation*}
It is then obvious that $E(0):L^{2}(\Mreg)\to L^{2}(\Mreg)$ is smoothing. Because
  $(1-\chi_{2})\tilde{A}(t) = \tilde{A}(t) (1-\chi_{2}) = 0$, we write
  \begin{align*}
    E'(t) &= e^{-itP}\left( \tilde{A}'(t) - i \left[ P,
            \tilde{A}(t)\right]\right) e^{itP} \\
    &= e^{-itP} \chi_{2} \left( \tilde{A}'(t) - i \left[ P,
      \tilde{A}(t)\right]\right)\chi_{2} e^{itP} \\
    &\quad - ie^{-itP}(1-\chi_{2}) P \tilde{A}(t) \chi_{2}  e^{itP} \\
    & \quad  + i e^{-itP}\chi_{2} \tilde{A}(t)  P (1-\chi_{2})  e^{itP}.
  \end{align*}
  Because the principal symbol of the inner part of the first term
  vanishes, we can write it as
  $e^{-itP}\chi_{2}R_{1}(t)\chi_{2}e^{itP}$, where
  $R_{1}(t)\in \Psi^{-1} (\Mreg)$.

  As $\chi_{1}\chi_{2} = \chi_{1}$ and $\tilde{A}(t)$ is supported
  where $\chi_{1}(x) \chi_{1}(y) \equiv 1$, the last two terms can be
  written
  \begin{equation*}
    - i e^{-itP} (1-\chi_{2}) P \chi_{1} \tilde{A}(t) \chi_{2} e^{itP}
    + i e^{-itP} \chi_{2} \tilde{A}(t) \chi_{1} P (1-\chi_{2}) e^{itP}.
  \end{equation*}

  We may therefore write the difference of interest as
  \begin{align*}
    e^{itP} A e^{-itP} - \tilde{A}(t) &= \int_{0}^{s}
                                        \chi_{2}R_{1}(s)\chi_{2}\,ds
    \\
                                      &\quad - i\int_{0}^{t} (1-\chi_{2})P
                                        \chi_{1}\tilde{A}(s)\chi_{2}
                                        \,ds + i \int_{0}^{t}\chi_{2}\tilde{A}(s)\chi_{1}P(1-\chi_{2})\,ds.
  \end{align*}
  The first term lies in $\Psi^{-1}(\Mreg)$ and has compactly
  supported Schwartz kernel; it is therefore compact on $L^{2}$.  The
  second two terms are both compact by
  Corollary~\ref{lem:compact-off-diagonal}. 
\end{proof}

\begin{rmk}
From the proof above, we observe that the compact operator $e^{itP} A e^{-itP}-\tilde A(t)$ is uniformly controlled for all $|t|\le T+\epsilon$.
\end{rmk}

\section{Proof of the main theorem}
\label{sec:proof-main-theorem}

We now show that under our assumptions, a modified version of the argument of Zelditch--Zworski~\cite[Section~3]{ZZ} still holds. Recall that $P = \sqrt{\lap}$.

We first establish some notation: For $B \in \Psi^{0}(\Mreg)$ with compactly supported Schwartz kernel and $T>0$, set
\begin{equation*}
  \rho_{j}(B) = \langle B \phi_{j} , \phi_{j}\rangle \quad\text{and}\quad \langle
  B\rangle_{T} = \frac{1}{2T}\int_{-T}^{T} e^{-itP}  B 
  e^{itP} \,dt.
\end{equation*}
Note that by Lemma~\ref{thm:egorov}, if $B$ has compactly supported Schwartz kernel and $\WF(B)$ is microsupported in $X_{2T+\epsilon}$, then with $\tilde{B}(t)$ as in
Lemma~\ref{thm:egorov} and
\begin{equation}\label{eq:average}
  \widetilde{\langle B \rangle}_{T} = \frac{1}{2T}\int_{-T}^{T}\tilde{B}(t)\,dt,
\end{equation}
we have that $\langle B \rangle _{T} - \widetilde{\langle B\rangle}_{T}:L^{2}(\Mreg)\to L^{2}(\Mreg)$ is compact.

Let $A \in \Psi^{0} (\Mreg)$ and write $a = \sigma (A)$. Suppose that $a\in C_c^\infty(S^*\Mreg)$ and $\kappa_A$ is compactly supported in $\Mreg \times\Mreg$. Set
\begin{equation*}
  \alpha = \int _{S^{*}M}a\quad\text{and}\quad \langle
 a\rangle_{T} = \frac{1}{2T}\int_{-T}^{T} a\circ\Phi_t \,dt,
\end{equation*}
where we are careful to use the second notation only for $a$ supported
in $X_{T+\epsilon}$.
The theorem then follows from a standard extraction procedure (see e.g. Zelditch-Zworski \cite{ZZ}) if we can show that
\begin{equation}\label{eq:twomoment}
  \frac{1}{N(\Lambda)} \sum_{\lambda_{j} \leq \Lambda} \left| \langle
    A\phi_{j}, \phi_{j}\rangle - \alpha \right|^{2} \to 0,
\end{equation}
as $\Lambda\to \infty$.

In the case where $\alpha = 0$, the proof essentially proceeds by a
series of approximations (the general case is proved fully below):
\begin{enumerate}
\item We replace $A$ by a family $A_{\epsilon, T}$ that have microsupport in the set $X_{2T+\epsilon}$. The difference of \eqref{eq:twomoment} for $A$ and $A_{\epsilon, T}$ can be estimated using the
local Weyl law in Lemma \ref{lem:local-Weyl}.
\item We then replace $A_{\epsilon,T}$ by an averaged operator
  $\widetilde{\langle A_{\epsilon, T} \rangle}_{T}$ (as in in
  \eqref{eq:average}) with compactly supported Schwartz kernel. By
  Egorov's theorem in Theorem \ref{thm:egorov}, $\widetilde{\langle
    A_{\epsilon, T} \rangle}_{T}$ is (modulo a compact operator) a
  pseudodifferential operator with principal symbol $\langle \sigma_0(A_{\epsilon, T})\rangle_T$.
\item We finally use the dynamical condition of ergodicity in $\Mreg$ to show that $\langle \sigma_0(A_{\epsilon, T})\rangle_T\to0$ when $T\to\infty$.
\end{enumerate}

We now let $T>0$, which later is chosen large enough. Write $U_{\epsilon} = U_{\epsilon}(T)$ as
\begin{equation*}
  U_{\epsilon} = \{ (x,\xi) \in X_{2T+\epsilon} \ : d (\pi(\Phi_{t}(x,\xi))
  , \mathcal{P} ) > \epsilon \text{ for all }|t| < 2T+\epsilon \}.
\end{equation*}
Observe that if $\epsilon < \epsilon '$, then $U_{\epsilon'}\Subset U_{\epsilon}$. Moreover, $\bigcap_{\epsilon>0}U_{\epsilon}=X_{2T}$, which is defined in \eqref{eq:XTY}.

Because the $U_{\epsilon}$ have compact closure away from $\mathcal{P}$, we can find microlocal cutoffs to the $U_{\epsilon}$. Namely, take $E_{\epsilon}\in \Psi^0(\Mreg)$ with compactly supported Schwartz kernels such that $\sigma(E_\epsilon)=1$ on $U_\epsilon$. Then $\lim_{\epsilon\to0}\sigma_0(E_\epsilon)=1$ on $X_{2T}$. Let
\begin{equation*}
  A_{\epsilon} = E_{\epsilon} A, \quad \alpha_{\epsilon} =
  \int_{S^{*}M}\sigma_0 (A_\epsilon), \quad R_{\epsilon} = I - E_{\epsilon}.
\end{equation*}

We now compare \eqref{eq:twomoment} for $A$ and $A_{\epsilon}$.  Write
\begin{equation}\label{eq:AAepsilon}
  C(\epsilon, \Lambda) = \frac{1}{N(\Lambda)} \sum_{\lambda_{j}\leq
    \Lambda} \left| \rho_{j}(A) - \alpha\right|^{2} -
  \frac{1}{N(\Lambda)}\sum_{\lambda_{j}\leq \Lambda} \left|
    \rho_{j}(A_{\epsilon}) - \alpha_{\epsilon}\right|^{2}.
\end{equation}
Note that $A=A_\epsilon+R_\epsilon A$. Letting $\beta_\epsilon =
\int_{S^{*}M}\sigma_{0}(R_{\epsilon}A)$, we have by the
Cauchy--Schwarz inequality,
\begin{eqnarray*}
C(\epsilon,\Lambda)&\le&\frac{2}{N(\Lambda)}\left(\sum_{\lambda_{j}\leq \Lambda}\left| \rho_{j}(A_{\epsilon}) -\alpha_{\epsilon}\right|^{2}\right)^{1/2}\left(\sum_{\lambda_{j}\leq \Lambda}\left| \rho_{j}(R_{\epsilon}A)-\beta_\epsilon)\right|^{2}\right)^{1/2}\\
&&+\frac{1}{N(\Lambda)}\sum_{\lambda_{j}\leq\Lambda}\left| \rho_{j}(R_{\epsilon}A) - \beta_\epsilon\right|^{2}\\
&\le&2\left(\frac{1}{N(\Lambda)}\sum_{\lambda_{j}\leq \Lambda}\rho_{j}((A_{\epsilon} -\alpha_{\epsilon})^*(A_{\epsilon} -\alpha_{\epsilon}))\right)^{1/2}\\
&&\times\left(\frac{1}{N(\Lambda)}\sum_{\lambda_{j}\leq \Lambda}\rho_{j}((R_{\epsilon}A-\beta_\epsilon)^*(R_{\epsilon}A-\beta_\epsilon))\right)^{1/2}\\
&&+\frac{1}{N(\Lambda)}\sum_{\lambda_{j}\leq\Lambda} \rho_{j}((R_{\epsilon}A -\beta_\epsilon)^*(R_{\epsilon}A- \beta_\epsilon)).
\end{eqnarray*}
Because the products $R_{\epsilon}A $ have compactly supported Schwartz kernel (since $A$ does), the local Weyl law of Lemma~\ref{lem:local-Weyl} shows that
$$\frac{1}{N(\Lambda)}\sum_{\lambda_{j}\leq\Lambda} \rho_{j}((R_{\epsilon}A -\beta_\epsilon)^*(R_{\epsilon}A- \beta_\epsilon))\to\left|\sigma_0(R_\epsilon A)-\beta_\epsilon\right|^2\quad\text{as }\Lambda \to \infty.$$
Therefore, using the trivial bound that $\rho_{j}((A_{\epsilon} -\alpha_{\epsilon})^*(A_{\epsilon} -\alpha_{\epsilon}))\le1$, we have that
\begin{equation}\label{eq:CepsilonLambda}
  C(T,\epsilon,\Lambda) = h_{T}(\epsilon) + r_{T,\epsilon}(\Lambda),
\end{equation}
where $r_{T,\epsilon}(\Lambda) \to 0$ as $\Lambda \to \infty$.
Because $\alpha(R_{\epsilon}A) \to 0$ and $\beta_\epsilon\to0$ as $\epsilon \to 0$, we also
know $h_{T}(\epsilon) \to 0$ as $\epsilon \to 0$.

We now turn our attention to the estimation of \eqref{eq:twomoment} involving $A_{\epsilon}$ and
$\alpha_{\epsilon}$:
\begin{eqnarray}
 &&\frac{1}{N(\Lambda)} \sum_{\lambda_{j}\leq \Lambda}\left| \langle A_{\epsilon}\phi_{j}, \phi_{j}\rangle - \alpha_{\epsilon}\right|^{2}\nonumber\\
 &\leq&\frac{1}{N(\Lambda)}\sum_{\lambda_{j}\leq \Lambda}\rho_{j}\left( \langle A_{\epsilon} - \alpha_{\epsilon}\rangle_{T}^{*}\langle A_{\epsilon}-\alpha_{\epsilon}\rangle_{T}\right)\nonumber\\
&=&\frac{1}{N(\Lambda)}\sum_{\lambda_{j}\leq \Lambda}\rho_{j}(B_{\epsilon,T})
\label{eq:AepsilonB}.
\end{eqnarray}
Observe that because $A_{\epsilon}$ is microsupported in
$X_{2T+\epsilon}$, Lemma~\ref{thm:egorov} allows us to replace
$B_{\epsilon,T}$ with
\begin{equation*}
  \tilde{B}_{\epsilon, T} = \widetilde{\langle A_{\epsilon} -
    \alpha_{\epsilon}\rangle}_{T}^{*}\widetilde{\langle A_{\epsilon} - \alpha_{\epsilon}\rangle}_{T},
\end{equation*}
whose principal symbol is
$$\left|
    \frac{1}{2T}\int_{-T}^{T}(\sigma_{0}(A_{\epsilon})\circ \Phi_{t} -
    \alpha_{\epsilon})\,dt\right|^{2},$$
moreover, $B_{\epsilon,T}-\tilde B_{\epsilon,T}:L^2(\Mreg)\to L^2(\Mreg)$ is compact. It then follows that
\begin{equation}\label{eq:BtildeB}
  \frac{1}{N(\Lambda)}\sum_{\lambda_{j}\leq
    \Lambda}\rho_{j}(B_{\epsilon,T}) \leq
  \frac{1}{N(\Lambda)}\sum_{\lambda_{j}\leq \Lambda}
  \rho_{j}(\tilde{B}_{\epsilon,T}) + f_{\epsilon,T}(\Lambda),
\end{equation}
where $f_{\epsilon,T}(\Lambda) \to 0$ as $\Lambda\to \infty$.

Since $\tilde B_{\epsilon,T}$ has compactly supported Schwartz kernel, the local Weyl law in Lemma~\ref{lem:local-Weyl} implies that
\begin{equation*}
  \frac{1}{N(\Lambda)} \sum_{\lambda_{j}\leq
    \Lambda}\rho_{j}(\tilde{B}_{\epsilon,T}) - \int_{S^{*}M}\left|
    \frac{1}{2T}\int_{-T}^{T}(\sigma_{0}(A_{\epsilon})\circ \Phi_{t} -
    \alpha_{\epsilon})\,dt\right|^{2}\,d\mu = \lo (1)
\end{equation*}
as $\Lambda\to \infty$. Putting together with \eqref{eq:AAepsilon}, \eqref{eq:CepsilonLambda}, \eqref{eq:AepsilonB}, and \eqref{eq:BtildeB}, we arrive at
\begin{equation*}
  \frac{1}{N(\Lambda)}\sum_{\lambda_{j}\leq \Lambda}\left| \rho_{j}(A)
    - \alpha\right|^{2} \leq \int_{S^{*}M} \left|
    \frac{1}{2T}\int_{-T}^{T}(\sigma_{0}(A_{\epsilon})\circ \Phi_{t} -
    \alpha_{\epsilon})\,dt\right|^{2}\,d\mu +
  F_{\epsilon,T}(\Lambda) + h_{T}(\epsilon),
\end{equation*}
in which $F_{\epsilon,T}(\Lambda)=r_{\epsilon,T}(\Lambda)+f_{\epsilon,T}(\Lambda)\to0$ as $\Lambda\to\infty$ and $h_{T}(\epsilon)\to0$ and $\epsilon \to 0$. To control the first time on the right-hand-side, notice that 
\begin{equation*}
g_T(\epsilon)=  \left| \int_{S^{*}M}\left| \frac{1}{2T}\int_{-T}^{T}(a\circ \Phi_{t}
      - \alpha)\,dt\right|^{2}\,d\mu - \int_{S^{*}M} \left|
      \frac{1}{2T}\int_{-T}^{T}(\sigma_{0}(A_{\epsilon})\circ\Phi_t -
      \alpha_{\epsilon})\,dt\right|^{2}\,d\mu\right| \to0
\end{equation*}
as $\epsilon\to0$ by dominated convergence theorem, since $\sigma_0(A_\epsilon)\to a$ and $\alpha_\epsilon\to\alpha$ as $\epsilon\to0$. We then use the ergodicity of the geodesic flow to conclude 
$$e(T)=\int_{S^{*}M} \left|
      \frac{1}{2T}\int_{-T}^{T}(\sigma_{0}(A) -
      \alpha)\,dt\right|^{2}\,d\mu\to0\quad\text{as }T\to\infty.$$
In total,
$$\frac{1}{N(\Lambda)}\sum_{\lambda_{j}\leq \Lambda}\left| \rho_{j}(A)
    - \alpha\right|^{2} \leq e(T)+g_T(\epsilon)+F_{\epsilon,T}(\Lambda) + h_{T}(\epsilon).$$
Taking $T$ large, $\epsilon$ small, and $\Lambda$ large successively, we complete the proof.

\section{Riemann moduli spaces with the Weil--Petersson metric}
\label{sec:riem-moduli-spac}

We now recall the definition and relevant properties of the Riemann
moduli spaces and their Weil-Petersson metrics; in particular, we show
that they satisfy assumptions (S) and (A) from the introduction, and
thus, from Theorem \ref{thm:main-thm}, we conclude that Theorem
\ref{thm:QE-moduli} holds.

As in the introduction, let $\mathcal{M}_{\gamma, n}$ denote the space of
equivalence classes of complex structures on a fixed, closed surface
$\Sigma$ of genus $\gamma$ with $n$ marked points $C = \{ p_1, \dots,
p_n \} \subset \Sigma$, where two complex structures on $\Sigma$ are equivalent if
one is the pullback of the other via a diffeomorphism $\Sigma$ which fixes
$C$.   The set $\mathcal{M}_{\gamma, n}$ admits a natural
compactification $\overline{\mathcal{M}}_{\gamma, n}$, the
Deligne--Mumford compactification, which includes, in addition to
complex structures on $\Sigma$, the nodal curves which can be obtained
by degenerations of complex structures $\Sigma$.  Then
$\overline{\mathcal{M}}_{\gamma, n}$ is a compact, complex orbifold of
complex dimension $3 \gamma - 3 + n$.  Within
$\overline{\mathcal{M}}_{\gamma, n}$ there is a finite family of
complex codimension $1$ ``normally crossing'' divisors, i.e.\ complex codimension $1$
sub-orbifolds, $D_1, \dots, D_\kappa$, such that $\bigcup_{i = 1}^\kappa D_i =
\overline{\mathcal{M}}_{\gamma, n} \setminus \mathcal{M}_{\gamma, n}$,
and any finite intersection $ \cap_{i \in J} D_i$ with $J \subset \{
1, \dots, \kappa  \}$, there is a
neighborhood $U$ of this intersection and a finite-to-one
ramified holomorphic resolution $V \lra U$ with $V$ an open complex
manifold such the inverse image of $ \cap_{i \in J} D_i$ is defined by the
vanishing of $|J|$ non-degenerate holomorphic functions $z_i$ with
linearly independent differentials on the intersection.   For further
background on the definition of $\mathcal{M}_{\gamma, n}$ and its Deligne--Mumford
compactification see for example the expository paper of Vakil
\cite{Va}.

Let $\Mreg = \mathcal{M}_{\gamma, n, \reg}$ be the top dimensional stratum of
$\mathcal{M}_{\gamma, n}$, i.e.\ the set $\mathcal{M}_{\gamma, n}$
minus the orbifold points.  This is a dense open set in
$\overline{\mathcal{M}}_{\gamma, n}$.  Recall our assumption $3\gamma
+ n \geq 4 $, which in the case $n = 0$ assures that $\gamma \ge 2$.
The Weil-Petersson metric $g_{\WP}$,
typically defined initially on the Teichm\"uller space and descending to a
smooth metric on $\Mreg$, is the Riemannian metric given locally by
identification of the cotangent bundle of $\Mreg$ at a point in $M$
(i.e.\ an equivalence class of Riemann surfaces  $[(\Sigma, c)]$), with the space
of transverse-traceless holomorphic quadratic differentials on the
uniformizing complete, hyperbolic metric $g$ on $(\Sigma \setminus C, c)$
with cusp-type singularities at $C$; the inner product on this
cotangent space is then given by the $L^2$-pairing defined by $g$.
This metric has a well-known decomposition near the divisors; at the
intersection $\cap_{i \in J} D_i$, for
appropriately chosen (holomorphic) defining functions $z_i = |z_i|
e^{\sqrt{-1} \theta_i}$ as in the
previous paragraph and setting $s_i^2 = 1 / \log(1/|z_i|)$, we have
\begin{equation}
  \label{eq:1}
  g_{\WP}  = \sum_{i \in J} c ds_i^2 + c' s_i^6 d\theta_i^2 + h_\cap + O(s^2)
\end{equation}
where $c, c' > 0$ are constants, $h_\cap$ is an (orbifold) metric on
$\cap_{i \in J} D_i$ and $s^2 = \sum_{i\in J} s_i^2$.  This expansion was
originally suggested by the work of Masur \cite{Mas} and established
in the work of a number of authors, including
by Liu--Sun--Yau~\cite{LSY} Wolpert~\cite{W1, W2, W3, W4} and
Yamada~\cite{Y}.  The full polyhomogeneous regularity of the
Weil-Petersson metric at the divisors is proven in Mazzeo--Swoboda~\cite{MS}
and Melrose--Zhu~\cite{MZ}.

We can now begin to address the structural and analytic assumptions.
Indeed, for (S1) and (S2), $\Mfull =
\overline{\mathcal{M}}_{\gamma, n}$, so $\Mfull - \Mreg$ is a closed
measure zero subset of $\Mfull$, and (S3) and (A1) follow from the local form
of the metric.  Skipping ahead to (A4) and (A5), consider the geodesic
flow of for the Weil-Petersson metric, which is
defined locally on $\Mreg$.  A result of Wolpert \cite{W2} implies
(see \cite{BMW}) that the set $X_\infty \subset S^* M$ of points in the
cosphere bundle on which the geodesic flow is defined for all times is
full measure, so its complement $\mathcal{Y}$ is measure zero, i.e.\
(A4) holds, and as mentioned in the introduction, that (A5) holds is the well-known result of
Burns--Masur--Wilkinson \cite{BMW}.

It remains to discuss (A2) and (A3).  Recall that, as is shown in
\cite{Lo94,Pi95},  $\overline{\mathcal{M}}_{\gamma, n}$
is in fact a ``good'' orbifold, meaning there is a complex manifold
$\Mfull'$ and a finite group $S$ acting on $\Mfull'$ by biholomorphic
maps (possibly with fixed points) such that the quotient if
$\overline{\mathcal{M}}_{\gamma, n} = \Mfull'/S$ and the projection 
\begin{equation}
\pi \colon \Mfull' \lra \overline{\mathcal{M}}_{\gamma,
  n}\label{eq:mod-res}
\end{equation}
is a smooth (ramified)
holomorphic map.  The pullback of the Weil-Petersson metric $\pi^*
g_{\WP}$ to $\Mfull'$ is a smooth Riemannian metric on
$\Mreg' :=\pi^{-1}(\mathcal{M}_{\gamma, n})$, and elements of $S$ are
automatically isometries of this pullback metric.  For
$\gamma$ fixed and $n$ large, one can take $\Mfull' = \overline{\mathcal{M}}_{\gamma,
  n}$ as there are no fixed points of the action of the mapping class
group on Teichm\"uller space, see \cite{Va,
  JMMV}.

Ji--Mazzeo--M{\"u}ller--Vasy~\cite{JMMV} study the general class
of complex orbifolds $\Mfull$ which have ``crossing cusp-edge''
singularities in the metric.  These are exactly those complex
Riemannian orbifolds whose metrics take the form described in the
above paragraphs near a fixed set of normally intersecting complex
codimension one divisors.  In particular, they prove that Laplacian on
$\mathcal{M}_{\gamma, n}$ is self-adjoint with core domain $C^\infty_{0,
  \orb}(\mathcal{M}_{\gamma, n})$, the Frechet space of smooth
functions $\phi$ such that, with $\pi$ the resolving map from the
previous paragraph, $\phi \circ \pi \in C^\infty_0(\Mreg')$.  In
words, these are the functions which are compactly supported in
$\mathcal{M}_{\gamma, n}$, smooth away from all orbifold
singularities, and lift via the local resolutions of the orbifold
singularities to smooth functions.  They prove (see Theorem 3) that
with this core domain, $\lap_{g_{\WP}}$ is essentially self-adjoint,
that the domain of this self-adjoint extension is compactly contained
in $L^2$ (see below Theorem 3), and that Weyl asymptotics hold for the
(necessarily discrete) spectrum (see Theorem 1).  (We remark again
that in \cite{JMMV} all the statements are for the non-pointed moduli spaces
$\mathcal{M}_{\gamma}$ but all of the theorems in the body of the
paper are for the general class of singular Riemannian space which
include $\mathcal{M}_{\gamma, n}$.) In particular,
assumptions (A2) and (A3) hold for this extension.

Thus the assumptions (S) and (A) hold for $\lap_{g_{\WP}}$ on $\mathcal{M}_{\gamma, n}$
with its unique self-adjoint extension with core domain $C^\infty_{0,
  \orb}$, i.e.\ Theorem \ref{thm:QE-moduli} follows from Theorem
\ref{thm:main-thm}.

\subsection{Orbifold regular PsiDO's on $\mathcal{M}_{\gamma, n}$}  We
now prove a stronger theorem for the Riemann moduli space.  We
continue with the notation of the previous section, in particular $M =
\mathcal{M}_{\gamma, n, \reg}$, consider
pseudodifferential operators $A \in \Psi^0_{0,\orb}(\mathcal{M}_{\gamma,
  n})$ which, by definition, are operators $A \colon C^\infty_0(M)
\lra \mathcal{D}'(M)$ which have compactly supported Schwartz kernel in
$M$ and are regular under local orbifold
resolutions; concretely, for the resolving map $\pi$ in
\eqref{eq:mod-res}, $\pi^* A \in \Psi^0(M')$ (and $\pi^*A$ is
compactly supported in $M'$.)  Equivalently, working on the resolved
space $\Mreg'$, these are
pseudodifferential operators $A \in \Psi^0(\Mreg')$ with compactly
supported Schwartz kernels which are invariant under the action of $S$
on $\Mreg'$.  This family of pseudodifferential operators is defined
independently of a choice of resolution $\Mreg'$ as it is a equivalent
to smoothness of the pullback of the $A$ via any local resolution, but
below we use a particular convenient choice of resolution,
specifically the one used in \cite[Sec.\ 6]{BMW},
$$
\Mreg' = \mathcal{T} / \mathrm{MCG}[k],
$$
where $\mathcal{T}$ is the Teichm\"uller space and
$$
\mathrm{MCG}[k] = \{ \psi \in \mathrm{MCG}(\Sigma) : \psi_* \equiv 0
\mbox{ acting on } H^1(\Sigma; \mathbb{Z}/k\mathbb{Z}) \},
$$
is a finite
index subgroup of the mapping class group $\mathrm{MCG}(\Sigma)$ which
is obviously normal.  In \cite[Thm.\ 6.4]{BMW},
the authors prove that the Weil-Petersson geodesic flow is ergodic on
this resolved space, so since the flow is defined for infinite times
on the pullback of a full measure set, both assumptions (A4) and (A5)
hold on $\Mreg'$ with the Weil-Petersson metric.  The moduli space is
then the quotient of $\Mreg'$ by the set of biholomorphic maps
parametrized by (and identified with representatives of the set of)
the group $S = \mathrm{MCG}(\Sigma) / \mathrm{MCG}[k]$.  This will be useful
to prove the following.
\begin{theorem}\label{thm:stronger}
Assumptions as in Theorem \ref{thm:QE-moduli}, there is a density one subsequence
$\{\phi_{j_k}\}\subset\{\phi_{j}\}$ such that for all $A \in
\Psi^0_{0, \orb}(\mathcal{M}_{\gamma, n})$,
\begin{equation*}
\langle A\phi_{j_k}, \phi_{j_k} \rangle \to
\int_{S^{*}\Mreg}\sigma_{0}(A)\,d\mu\quad\text{as }k\to\infty.
\end{equation*} 
\end{theorem}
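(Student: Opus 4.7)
My plan is to reduce Theorem~\ref{thm:stronger} to Theorem~\ref{thm:main-thm} applied on the resolved cover $\Mreg' = \mathcal{T}/\mathrm{MCG}[k]$. The key observation is that, by the very definition of $\Psi^0_{0,\orb}(\mathcal{M}_{\gamma,n})$, any such $A$ lifts to an $S$-invariant operator $\tilde A = \pi^* A \in \Psi^0(\Mreg')$ whose Schwartz kernel is compactly supported in $\Mreg'\times \Mreg'$, i.e., away from the preimage of the Deligne--Mumford boundary. The preceding subsection already establishes that $(\Mreg', \pi^* g_{\WP})$ satisfies assumptions (S) and (A), with singular locus equal to this preimage.

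First I would lift the eigenbasis: the functions $\psi_j := \pi^* \phi_j/\sqrt{|S|}$ form an $L^2(\Mreg')$-orthonormal set of $S$-invariant eigenfunctions of $\lap_{\pi^*g_{\WP}}$ spanning the $S$-invariant subspace of every eigenspace. I would then complete these to a full ON eigenbasis $\{\psi_k\}_{k\in\mathbb{N}}$ of $\lap_{\pi^*g_{\WP}}$ on $\Mreg'$, with the $S$-invariant indices forming a subset $I\subset \mathbb{N}$. Applying Theorem~\ref{thm:main-thm} on $\Mreg'$ then produces a single density-one subset $\mathcal{E}\subset \mathbb{N}$ along which quantum ergodicity holds simultaneously for every zero-order pseudodifferential operator on $\Mreg'$ with compactly supported Schwartz kernel, and in particular for every $\tilde A$ of the above form.

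Next I would match the quantities across the cover. Since $\pi$ is a local isometry of degree $|S|$ and $\tilde A$ is $S$-invariant, straightforward change-of-variables computations will give
\[
\langle \tilde A \psi_k, \psi_k\rangle_{L^2(\Mreg')} = \langle A \phi_k, \phi_k\rangle_{L^2(\mathcal{M}_{\gamma,n})} \quad \text{for } k\in I,
\]
and that the normalized Liouville integrals agree,
\[
\int_{S^*\Mreg'} \sigma_0(\tilde A)\, d\mu' \;=\; \int_{S^* \mathcal{M}_{\gamma,n}} \sigma_0(A)\, d\mu.
\]
Finally, the Weyl laws downstairs and upstairs show that the counting function on $\Mreg'$ is $|S|$ times the downstairs one to leading order, so $I$ has density $1/|S|$ in $\mathbb{N}$. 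Since $\mathcal{E}^c$ has density zero in $\mathbb{N}$, the set $I\setminus \mathcal{E}$ does too, whence $I\cap \mathcal{E}$ has density one within $I$. The corresponding indices yield the desired density-one subsequence $\{\phi_{j_k}\}$ of $\{\phi_j\}$.

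The hard part -- or at least the part requiring the most care -- will be carefully tracking the covering degree $|S|$ across every normalization: the $L^2$-norm of $\psi_j$ on $\Mreg'$, the unit-mass normalization of the Liouville measure on $S^*\Mreg'$, and the leading coefficient in the Weyl law. Once these bookkeeping checks are in place, the argument is simply a transfer of Theorem~\ref{thm:main-thm} from $\Mreg'$ to the orbifold quotient $\mathcal{M}_{\gamma,n}$ via the finite group $S$.
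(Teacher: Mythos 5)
Your proposal is correct and follows essentially the same route as the paper: apply Theorem~\ref{thm:main-thm} on the resolved cover $\Mreg'=\mathcal{T}/\mathrm{MCG}[k]$, identify the downstairs eigenfunctions with the $S$-invariant eigenfunctions upstairs (the paper justifies this identification by averaging $\phi\mapsto|S|^{-1}\sum_{\psi\in S}\psi^*\phi$), and use the Weyl-law ratio $\tilde N(\lambda)/N(\lambda)\to|S|$ to see that the invariant indices have positive density, so a density-one subsequence upstairs restricts to a density-one subsequence of the invariant family. Your normalization bookkeeping (the factor $|S|$ in $L^2$ norms, Liouville mass, and counting functions) is exactly the ``dividing by the area'' step in the paper's proof.
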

\begin{proof}
 Since $(\Mreg', \pi^* g_{\WP})$ is a smooth crossing cusp-edge space, the results of \cite{JMMV}
  show that $\lap_{\pi^* g_{\WP}}$ is essentially self-adjoint
  with core domain $C^\infty_0(\Mreg')$, and that assumptions
  (A2)--(A3) hold for this self-adjoint extension.   The rest of the
  assumptions (S) and (A) also follow.  Indeed, the assumptions (S)
  assumptions and  (A1) hold automatically, and assumptions (A4) and
  (A5) follow as discussed prior to the statement of the theorem.
  Thus all the hypotheses are satisfied and the conclusion of Theorem
  \ref{thm:main-thm} applies to $\lap_{\pi^* g_{\WP}} $.
  
  The theorem now follows easily from considering the identification
  of the eigenspaces $E_\lambda$ of $\lap_{g_{\WP}}$ on $\mathcal{M}_{\gamma, n}$
  for the unique self-adjoint extension from $C^\infty_{0, \orb}$ with
  the $S$-invariant eigenspaces of $\lap_{\pi^* g_{\WP}}$.  Indeed,
  let $\tilde E_\lambda$ denote an eigenspace of $\lap_{\pi^*
    g_{\WP}}$, and note that since $S$ acts on $(\Mreg', \pi^*
  g_{\WP})$ by isometries, it acts by pullback on $\tilde E_\lambda$.
  Letting $\phi \in \tilde E_\lambda$, then $\phi^S =
  |S|^{-1} \sum_{\psi \in S} \psi^* \phi$ is an $S$-invariant function
  on $\Mreg'$ and thus descends to a function on $\mathcal{M}_{\gamma,
    n}$ which it is easy to see lies in the domain under
  consideration.  The other direction of identification is automatic.
  Thus for all $\lambda \in \mathrm{spec}(\lap_{\pi^* g_{\WP}})$,
  $$
  \tilde E_\lambda^S := \{ \phi \in \tilde E_\lambda : \psi^* \phi =
  \phi \} \subset \tilde E_\lambda
  $$
  satisfies $\tilde E_\lambda^S = E_\lambda$ gives an identification
  of $E_\lambda$ with a subset of $\tilde E_\lambda$.  In particular,
  we may choose an orthonormal basis of eigenfunctions $\tilde \phi_j$
  of $(\Mreg', \pi^*  g_{\WP})$ which contains a subsequence of an
  orthonormal basis of $\tilde E_\lambda^S$.

  On the other hand, the Weyl asymptotic formulas implies that, if
  $\tilde N(\lambda)$ is the  eigenvalue counting function for $(\Mreg', \pi^*
  g_{\WP})$ and $N(\lambda)$ the counting function for
  $(\mathcal{M}_{\gamma, n}, g_\WP)$, 
  \begin{equation}
    \label{eq:2}
    \frac{\tilde N(\lambda)}{N(\lambda)} = \frac{\Vol (\Mreg', \pi^*
  g_{\WP})}{\Vol(\mathcal{M}_{\gamma, n}, g_\WP)} + o(1)  = |S| + o(1)\mbox{ as }
\lambda \to \infty.
\end{equation}
Hence any full density subsequence of eigenfunctions of $(\Mreg', \pi^*
  g_{\WP})$ contains a full density subsequence of eigenfunctions
  coming from the $\tilde E_\lambda^S$. Now there is a full density subsequence of eigenfunctions
  $(\tilde \phi_{j_k})$ which satisfy the conclusion of Theorem
  \ref{thm:QE-moduli}. It contains a subsequence of invariant
  eigenfunctions $(\tilde \phi_\ell^S) = (\tilde
  \phi_{j_k}) \cap L^2_S(\Mfull')$ that also satisfy the conclusion
  fo Theorem \ref{thm:QE-moduli} and in addition each $\tilde
  \phi_\ell^S = \pi^* \phi_\ell$ for some eigenfunction on
  $\mathcal{M}_{\gamma, n}$. Thus for any $B \in
  \Psi^0(\Mreg')$ with compact support, we have
\begin{equation*}
\langle B \tilde \phi_{\ell}^S, \tilde \phi_{\ell}^S \rangle \to
\int_{S^{*}\Mreg'}\sigma_{0}(B)\,d\mu\quad\text{as }\ell\to\infty.
\end{equation*} 
Taking $B = \pi^*A$ and dividing by the area gives the result. \end{proof}

\section{Hyperbolic surfaces with conic singularities}
\label{sec:hyperb-surf-with}

We consider the example of hyperbolic surfaces with conic
singularities.\footnote{We consider only surfaces for the sake of
  brevity; the same method likely extends to hyperbolic cone manifolds
of arbitrary dimension as described by McMullen~\cite{McMullen}.}  Concretely, consider a compact Riemann surface
$\overline{M}$ of genus $\gamma$, a finite set of points
$\mathcal{P}$.  Suppose $\overline{M}$ is equipped with a Riemannian
metric $g$ smooth on the complement
$\Mreg = \overline{M}\setminus \mathcal{P}$ and so that
\begin{enumerate}
\item for each $p \in \mathcal{P}$ there are conformal coordinates
  $\tilde{z}$ with $\tilde{z}(p) = 0$, 
\item in the (non-smooth) coordinates $z =
  \alpha^{-1}\tilde{z}^{\alpha}$, we have
  \begin{equation*}
    g = dr^{2} + \alpha^{2}\sinh^{2}r \, d\theta^{2}, 
  \end{equation*}
  where $z = re^{i\theta}$, and 
\item $g$ is hyperbolic on $\overline{M}\setminus \mathcal{P}$.
\end{enumerate}
Here $\alpha = 1$ corresponds to a ``phantom singularity''; in other
words, when $\alpha = 1$, the metric extends to be smooth at the point
$p$.

Given a finite set of points $\mathcal{P} = \{ p_{1}, \dots, p_{k}\}$
and numbers $\alpha_{1}, \dots , \alpha_{k} \in (0, \infty)$,
McOwen~\cite{Mc} showed the existence (and uniqueness) of a
hyperbolic metric on $\Mreg$ with conic singularities of the form
above at the points $p_{j}$ with constants $\alpha_{j}$.

The spectral theory and heat kernel asymptotics of various
self-adjoint extensions of the Laplacian $\lap_{g}$ (and the Laplace
operator on more general Riemannian spaces with conic singularities)
were studied originally by Cheeger~\cite{C}, with later works
including Lesch~\cite{L}, Mooers~\cite{Mo}, and
Gil--Mendoza~\cite{GM}.  In particular, the first three analytic
assumptions are well-known; see, for example, the book of
Lesch~\cite[Page 72]{L}.

We verify assumption (A4) directly; assumption (A5) follows from the
hyperbolicity of the metric (one can treat $\Mreg =
\overline{M}\setminus \mathcal{P}$ as an open hyperbolic system). See e.g. Brin \cite[Appendix]{BB} for a short and nice proof for ergodicity of Anosov geodesic flows.
\begin{lemma}
  \label{lem:a4-for-hyp}
  The set
  \begin{equation*}
    \mathcal{Y} = \{ (x,\xi) \in S^{*}\Mreg : \pi (\Phi_{t}(x,\xi))\in
    \mathcal{P} \text{ for some }t\in \reals\}
  \end{equation*}
  has measure zero.
\end{lemma}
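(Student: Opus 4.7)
The plan is to write $\mathcal{Y}$ as a finite union $\bigcup_{p \in \mathcal{P}} E_p$, where
\begin{equation*}
  E_p = \{(x,\xi) \in S^*\Mreg : \pi(\Phi_t(x,\xi)) = p \text{ for some } t \in \reals\}.
\end{equation*}
Since $\mathcal{P}$ is finite, it suffices to show each $E_p$ has Liouville measure zero. The underlying heuristic is a dimension count: $S^*\Mreg$ is three-dimensional (as $\dim \Mreg = 2$), but trajectories through the single point $p$ form at most a two-parameter family, parameterized by an incoming direction at $p$ and a position along the trajectory.

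To implement this, fix $p \in \mathcal{P}$ and work in the conic coordinates $g = dr^2 + \alpha^2 \sinh^2 r \, d\theta^2$ near $p$. The radial rays $\theta \equiv \theta_0$ are unit-speed geodesics emanating from $p$. For each $\theta_0 \in S^1 = \reals/2\pi\reals$, let $\Gamma_{p, \theta_0} : (0, T_{p,\theta_0}) \to \Mreg$ denote the maximal extension of this radial geodesic in the regular region, where $T_{p,\theta_0}$ is the first positive time at which the trajectory meets $\mathcal{P}$. Define the smooth map
\begin{equation*}
  F_p : \Omega_p \to S^*\Mreg, \quad (\theta, t) \mapsto \left(\Gamma_{p,\theta}(t), \dot{\Gamma}_{p,\theta}(t)\right),
\end{equation*}
where $\Omega_p = \{(\theta, t) \in S^1 \times (0, \infty) : t < T_{p,\theta}\}$ is open in the two-manifold $S^1 \times (0,\infty)$. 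I would then argue that $E_p \subseteq F_p(\Omega_p) \cup (-F_p)(\Omega_p)$ (with $-F_p$ accounting for trajectories hitting $p$ at positive rather than negative time): given $(x,\xi) \in E_p$ with $\pi(\Phi_{t_0}(x,\xi)) = p$, the approaching geodesic segment is, up to time reversal, a radial ray emanating from $p$, so $(x,\xi) = \pm F_p(\theta, |t_0|)$ for a uniquely determined $\theta$. Since $F_p$ and $-F_p$ are smooth maps from a two-manifold into the three-manifold $S^*\Mreg$, their images have Lebesgue measure zero, and hence Liouville measure zero.

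The main technical point is the claim that any geodesic reaching $p$ does so along a radial ray, so that the parameterization above in fact exhausts $E_p$. This can be seen directly from conservation of angular momentum: along any geodesic, the quantity $L := \alpha^2 \sinh^2 r \, \dot\theta$ is constant, and unit speed gives $\dot r^2 = 1 - L^2/(\alpha^2 \sinh^2 r)$, which is non-negative only for $\sinh r \geq |L|/\alpha$. Thus the trajectory can approach $r = 0$ only when $L = 0$, i.e., only along a radial ray $\theta \equiv \theta_0$; in particular the incoming angle is uniquely determined and spiraling geodesics never reach the cone tip. Summing the measure-zero sets $E_p$ over the finite collection $\mathcal{P}$ yields $\mu(\mathcal{Y}) = 0$.
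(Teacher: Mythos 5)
Your proof is correct, and it reaches the conclusion by a somewhat different mechanism than the paper. The paper localizes in time: it introduces the sets $Y_{\pm,T}$ of covectors whose trajectory meets $\mathcal{P}$ within time $T$, observes that for $T$ small these are null ``by the model form of the metric,'' and then writes $\mathcal{Y}$ as a countable union of flowouts of such sets, using that the geodesic flow on the regular part is smooth (indeed Liouville-measure preserving), so flowouts of null sets are null. You instead decompose by cone point and parameterize each $E_p$ globally: the Clairaut integral $L=\alpha^2\sinh^2 r\,\dot\theta$ forces $L=0$ for any geodesic approaching $r=0$, so every trajectory reaching $p$ does so radially, and hence $E_p$ lies in the image (together with its fiberwise antipode) of the smooth two-parameter family $(\theta,t)\mapsto(\Gamma_{p,\theta}(t),\dot\Gamma_{p,\theta}(t))$, whose image is Liouville-null in the three-dimensional $S^*M$. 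What your route buys is that the paper's elided local step is made completely explicit, and no flowout or measure-invariance bookkeeping is needed, since time along the trajectory is one of your two parameters; what the paper's phrasing buys is that it only requires the weaker local statement that the small-time hitting set near a cone point is null, plus the general flowout principle, a pattern that adapts to singular models where no clean first integral is available. Two small points would tighten your write-up: ``$\pi(\Phi_t(x,\xi))\in\mathcal{P}$ for some $t$'' should be read as the trajectory having finite lifespan, and then unit speed together with compactness of $\overline{M}$ forces convergence to some $p\in\mathcal{P}$, so the Clairaut argument genuinely applies to the final segment inside the model chart; and smoothness of $F_p$ for $t>0$ follows by composing the explicit radial solution in the chart with the flow on the regular part (local Lipschitz regularity already suffices for the image to be null).
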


\begin{proof}
  For $T > 0$, let
  \begin{equation*}
    Y_{\pm, T} = \{ (x,\xi) \in S^{*}\Mreg : \pi (\Phi_{t}(x,\xi) )
    \in \mathcal{P} \text{ for some } t, \pm t \in (0,T)\}.
  \end{equation*}
  For $T$ sufficiently small, $Y_{\pm, T}$ has measure zero by the
  model form of the metric.  We now realize $\mathcal{Y}$ as the
  countable union of flowouts of $Y_{\pm, T}$ and so it has measure zero.
\end{proof}

As $(\Mreg, g)$ satisfies the structural and analytic hypotheses, we
have the following corollary:
\begin{cor}
  \label{lem:hyperbolic-surfaces}
  If $(\Mreg, g)$ is a hyperbolic surface with conic singularities,
  then it is quantum ergodic as in Theorem~\ref{thm:main-thm}.
\end{cor}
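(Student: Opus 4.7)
The plan is to verify that the hyperbolic surface with conic singularities $(\Mreg, g)$ satisfies all the structural assumptions (S1)--(S3) and analytic assumptions (A1)--(A5), then invoke Theorem~\ref{thm:main-thm} directly. Most of the work has already been done in the lead-up to the corollary; the proof is primarily a matter of assembling the pieces.

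First I would check (S1)--(S3). The natural compactification is $\Mfull = \overline{M}$ itself, and the singular locus $\mathcal{P} = \{p_1, \dots, p_k\}$ is a finite set, hence closed with measure zero, giving (S2). The local model $g = dr^2 + \alpha^2 \sinh^2 r \, d\theta^2$ shows that $r$ coincides with the Riemannian distance to $p_j$ in a punctured neighborhood, so the metric ball structure near each puncture is controlled, giving (S1). For (S3), curves passing through any $p_j$ can be approximated in length by curves that detour around the puncture (using that $r \to 0$ yields vanishing length contribution, since $\int_0^\epsilon dr < \infty$), so the length distance extends to a genuine metric on $\Mfull \times \Mfull$ rather than merely a pseudometric.

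Next I would address (A1)--(A3). Finiteness of the volume (A1) is immediate: away from $\mathcal{P}$ the surface is precompact, and the local volume form $\alpha \sinh r \, dr \, d\theta$ is integrable down to $r = 0$. For (A2) and (A3), I would cite the well-developed theory of the Laplacian on conic surfaces: by Cheeger~\cite{C}, Lesch~\cite{L}, Mooers~\cite{Mo}, and Gil--Mendoza~\cite{GM}, there exists a self-adjoint extension of $\lap_g$ (for instance the Friedrichs extension) with compact resolvent, and the Weyl law holds in the form required by (A3) (see Lesch~\cite[p.~72]{L}). One simply fixes any such extension.

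Finally, (A4) is precisely Lemma~\ref{lem:a4-for-hyp}, proved immediately above. For (A5), I would invoke that the geodesic flow on a hyperbolic surface is Anosov, so on the open hyperbolic system $X_\infty = \mathcal{Y}^c$, the flow is ergodic with respect to Liouville measure; the short argument of Brin~\cite[Appendix]{BB} handles exactly this. Having verified all the hypotheses, Theorem~\ref{thm:main-thm} delivers the conclusion of quantum ergodicity, completing the proof. The only subtle point, and the one worth double-checking, is that the chosen self-adjoint extension has Schwartz-kernel-compactly-supported pseudodifferential operators behaving as required by the preliminaries of Section~\ref{sec:preliminaries}; since those arguments only used the heat kernel off-diagonal smoothing and compactness of the resolvent, which both hold here, no additional work is needed.
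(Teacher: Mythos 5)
Your proposal is correct and follows essentially the same route as the paper: verify (S1)--(S3) and (A1)--(A3) from the local conic form of the metric and the classical results of Cheeger, Lesch, Mooers, and Gil--Mendoza, take (A4) from Lemma~\ref{lem:a4-for-hyp}, obtain (A5) from hyperbolicity via Brin's argument, and then apply Theorem~\ref{thm:main-thm}. Nothing further is needed.
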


\end{document}